\DeclareMathAlphabet\EuScript{U}{eus}{m}{n}
\SetMathAlphabet\EuScript{bold}{U}{eus}{b}{n}
\DeclareSymbolFont{rsfs}{U}{rsfs}{m}{n}
\DeclareSymbolFontAlphabet{\mathrsfs}{rsfs}
\theoremstyle{plain}
\newtheorem{theorem}{Theorem}
\newtheorem{lemma}[theorem]{Lemma}
\newtheorem{remark}[theorem]{Remark}
\theoremstyle{remark}
\newcommand{\R}{\mathbb{R}}
\newcommand{\C}{\mathbb{C}}
\newcommand{\Hyp}{\mathbb{H}}
\newcommand{\e}{\varepsilon}
\newcommand{\hyp}{\mathbb{H}}
\newif\ifnotesw\noteswtrue
\newcommand{\hide}[1]{}
\begin{document}
\title{Lower bounds for the measurable chromatic number of the hyperbolic plane}
\author[1]{Evan DeCorte}
\affil[1]{McGill University}
\author[2]{Konstantin Golubev}
\affil[2]{Bar-Ilan University and Weizmann Institute of Science}
\date{\today}

\maketitle

\abstract{Consider the graph $\Hyp(d)$ whose vertex set is the hyperbolic plane, where two points are connected with an edge when their distance is equal to some $d>0$. Asking for the chromatic number of this graph is the hyperbolic analogue to the famous Hadwiger-Nelson problem about colouring the points of the Euclidean plane so that points at distance $1$ receive different colours.

As in the Euclidean case, one can lower bound the chromatic number of $\Hyp(d)$ by $4$ for all $d$. Using spectral methods, we prove that if the colour classes are measurable, then at least $6$ colours are are needed to properly colour $\Hyp(d)$ when $d$ is sufficiently large.}

\section{Introduction}

The Hadwiger-Nelson problem \cite{Soifer2016}, posed in the 1950's, asks for the smallest possible number of colours required to paint the points of $\R^2$ so that no two points at distance $1$ receive
the same colour. A lower bound of $4$ is obtained by noticing that any $3$-colouring
of $\R^2$ would induce a $3$-colouring of the the Moser spindle (see Figure~\ref{fig:MoserSpindle}).
On the other hand, one can obtain an upper bound of $7$ by colouring the hexagons in a hexagonal tiling of $\R^2$. Amazingly, no improvement to these bounds has been made since the
original statement of the problem.
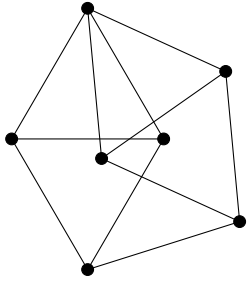
\begin{figure}[H]
\centering
\begin{tikzpicture}[scale=2,rotate=270]

	\draw (0:0)--(-30:1)--(30:1)--(0:0);
	\draw (-30:1)--(0:{sqrt(3)})--(30:1);
	\draw[rotate={asin(sqrt(3)/3)}] (0:0)--(-30:1)--(30:1)--(0:0);
	\draw[rotate={asin(sqrt(3)/3)}] (-30:1)--(0:{sqrt(3)})--(30:1););
	\draw (0:{sqrt(3)})--({asin(sqrt(3)/3)}:{sqrt(3)});
	
	\node[draw,fill,circle,scale=0.4, fill=black] at (0:0) (){};
	\node[draw,fill,circle,scale=0.4, fill=black] at (-30:1) (){};
	\node[draw,fill,circle,scale=0.4, fill=black] at (30:1) (){};
	\node[draw,fill,circle,scale=0.4, fill=black] at ({-30+asin(sqrt(3)/3)}:1) (){};
	\node[draw,fill,circle,scale=0.4, fill=black] at ({30+asin(sqrt(3)/3)}:1) (){};
	\node[draw,fill,circle,scale=0.4, fill=black] at (0:{sqrt(3)}) (){};
	\node[draw,fill,circle,scale=0.4, fill=black] at ({asin(sqrt(3)/3)}:{sqrt(3)}) (){};

\end{tikzpicture}
\caption{Moser spindle: a four-chromatic unit-distance graph.} \label{fig:MoserSpindle}
\end{figure}

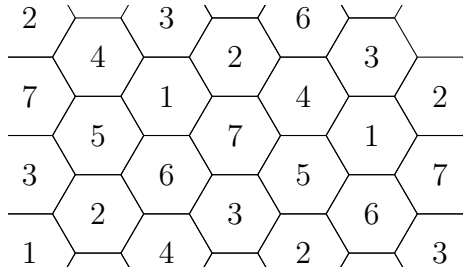
\begin{figure}[H]
\centering
\begin{tikzpicture}[scale=0.6,rotate=0]
	\clip (150:4) rectangle (-30:7.5);

	\draw (0:1)--(60:1)--(120:1)--(180:1)--(240:1)--(300:1)--(0:1);
	
	\draw[shift={(30:{sqrt(3)})}](0:1)--(60:1)--(120:1)--(180:1)--(240:1)--(300:1)--(0:1);
	\draw[shift={(90:{sqrt(3)})}] (0:1)--(60:1)--(120:1)--(180:1)--(240:1)--(300:1)--(0:1);
	\draw[shift={(150:{sqrt(3)})}](0:1)--(60:1)--(120:1)--(180:1)--(240:1)--(300:1)--(0:1);
	\draw[shift={(210:{sqrt(3)})}](0:1)--(60:1)--(120:1)--(180:1)--(240:1)--(300:1)--(0:1);
	\draw[shift={(270:{sqrt(3)})}] (0:1)--(60:1)--(120:1)--(180:1)--(240:1)--(300:1)--(0:1);
	\draw[shift={(330:{sqrt(3)})}](0:1)--(60:1)--(120:1)--(180:1)--(240:1)--(300:1)--(0:1);

	\node (a) at (0,0) {1};
	\node (a) at (30:{sqrt(3)}) {2};
	\node (a) at (90:{sqrt(3)}) {3};
	\node (a) at (150:{sqrt(3)}) {4};
	\node (a) at (210:{sqrt(3)}) {5};
	\node (a) at (270:{sqrt(3)})  {6};
	\node (a) at (330:{sqrt(3)}) {7};
	
\begin{scope}[shift={(60:3)}]
\begin{scope}[shift={(30:{sqrt(3)})}]
	\draw (0:1)--(60:1)--(120:1)--(180:1)--(240:1)--(300:1)--(0:1);
	
	\draw[shift={(30:{sqrt(3)})}](0:1)--(60:1)--(120:1)--(180:1)--(240:1)--(300:1)--(0:1);
	\draw[shift={(90:{sqrt(3)})}] (0:1)--(60:1)--(120:1)--(180:1)--(240:1)--(300:1)--(0:1);
	\draw[shift={(150:{sqrt(3)})}](0:1)--(60:1)--(120:1)--(180:1)--(240:1)--(300:1)--(0:1);
	\draw[shift={(210:{sqrt(3)})}](0:1)--(60:1)--(120:1)--(180:1)--(240:1)--(300:1)--(0:1);
	\draw[shift={(270:{sqrt(3)})}] (0:1)--(60:1)--(120:1)--(180:1)--(240:1)--(300:1)--(0:1);
	\draw[shift={(330:{sqrt(3)})}](0:1)--(60:1)--(120:1)--(180:1)--(240:1)--(300:1)--(0:1);

	\node (a) at (0,0) {1};
	\node (a) at (30:{sqrt(3)}) {2};
	\node (a) at (90:{sqrt(3)}) {3};
	\node (a) at (150:{sqrt(3)}) {4};
	\node (a) at (210:{sqrt(3)}) {5};
	\node (a) at (270:{sqrt(3)})  {6};
	\node (a) at (330:{sqrt(3)}) {7};
\end{scope}		
\end{scope}	

\begin{scope}[shift={(180:3)}]
\begin{scope}[shift={(150:{sqrt(3)})}]
	\draw (0:1)--(60:1)--(120:1)--(180:1)--(240:1)--(300:1)--(0:1);
	
	\draw[shift={(30:{sqrt(3)})}](0:1)--(60:1)--(120:1)--(180:1)--(240:1)--(300:1)--(0:1);
	\draw[shift={(90:{sqrt(3)})}] (0:1)--(60:1)--(120:1)--(180:1)--(240:1)--(300:1)--(0:1);
	\draw[shift={(150:{sqrt(3)})}](0:1)--(60:1)--(120:1)--(180:1)--(240:1)--(300:1)--(0:1);
	\draw[shift={(210:{sqrt(3)})}](0:1)--(60:1)--(120:1)--(180:1)--(240:1)--(300:1)--(0:1);
	\draw[shift={(270:{sqrt(3)})}] (0:1)--(60:1)--(120:1)--(180:1)--(240:1)--(300:1)--(0:1);
	\draw[shift={(330:{sqrt(3)})}](0:1)--(60:1)--(120:1)--(180:1)--(240:1)--(300:1)--(0:1);

	\node (a) at (0,0) {1};
	\node (a) at (30:{sqrt(3)}) {2};
	\node (a) at (90:{sqrt(3)}) {3};
	\node (a) at (150:{sqrt(3)}) {4};
	\node (a) at (210:{sqrt(3)}) {5};
	\node (a) at (270:{sqrt(3)})  {6};
	\node (a) at (330:{sqrt(3)}) {7};
\end{scope}		
\end{scope}

\begin{scope}[shift={(0:3)}]
\begin{scope}[shift={(-30:{sqrt(3)})}]
	\draw (0:1)--(60:1)--(120:1)--(180:1)--(240:1)--(300:1)--(0:1);
	
	\draw[shift={(30:{sqrt(3)})}](0:1)--(60:1)--(120:1)--(180:1)--(240:1)--(300:1)--(0:1);
	\draw[shift={(90:{sqrt(3)})}] (0:1)--(60:1)--(120:1)--(180:1)--(240:1)--(300:1)--(0:1);
	\draw[shift={(150:{sqrt(3)})}](0:1)--(60:1)--(120:1)--(180:1)--(240:1)--(300:1)--(0:1);
	\draw[shift={(210:{sqrt(3)})}](0:1)--(60:1)--(120:1)--(180:1)--(240:1)--(300:1)--(0:1);
	\draw[shift={(270:{sqrt(3)})}] (0:1)--(60:1)--(120:1)--(180:1)--(240:1)--(300:1)--(0:1);
	\draw[shift={(330:{sqrt(3)})}](0:1)--(60:1)--(120:1)--(180:1)--(240:1)--(300:1)--(0:1);

	\node (a) at (0,0) {1};
	\node (a) at (30:{sqrt(3)}) {2};
	\node (a) at (90:{sqrt(3)}) {3};
	\node (a) at (150:{sqrt(3)}) {4};
	\node (a) at (210:{sqrt(3)}) {5};
	\node (a) at (270:{sqrt(3)})  {6};
	\node (a) at (330:{sqrt(3)}) {7};
\end{scope}		
\end{scope}	

\begin{scope}[shift={(-60:3)}]
\begin{scope}[shift={(-90:{sqrt(3)})}]
	\draw (0:1)--(60:1)--(120:1)--(180:1)--(240:1)--(300:1)--(0:1);
	
	\draw[shift={(30:{sqrt(3)})}](0:1)--(60:1)--(120:1)--(180:1)--(240:1)--(300:1)--(0:1);
	\draw[shift={(90:{sqrt(3)})}] (0:1)--(60:1)--(120:1)--(180:1)--(240:1)--(300:1)--(0:1);
	\draw[shift={(150:{sqrt(3)})}](0:1)--(60:1)--(120:1)--(180:1)--(240:1)--(300:1)--(0:1);
	\draw[shift={(210:{sqrt(3)})}](0:1)--(60:1)--(120:1)--(180:1)--(240:1)--(300:1)--(0:1);
	\draw[shift={(270:{sqrt(3)})}] (0:1)--(60:1)--(120:1)--(180:1)--(240:1)--(300:1)--(0:1);
	\draw[shift={(330:{sqrt(3)})}](0:1)--(60:1)--(120:1)--(180:1)--(240:1)--(300:1)--(0:1);

	\node (a) at (0,0) {1};
	\node (a) at (30:{sqrt(3)}) {2};
	\node (a) at (90:{sqrt(3)}) {3};
	\node (a) at (150:{sqrt(3)}) {4};
	\node (a) at (210:{sqrt(3)}) {5};
	\node (a) at (270:{sqrt(3)})  {6};
	\node (a) at (330:{sqrt(3)}) {7};
\end{scope}		
\end{scope}	

\begin{scope}[shift={(-120:3)}]
\begin{scope}[shift={(-150:{sqrt(3)})}]
	\draw (0:1)--(60:1)--(120:1)--(180:1)--(240:1)--(300:1)--(0:1);
	
	\draw[shift={(30:{sqrt(3)})}](0:1)--(60:1)--(120:1)--(180:1)--(240:1)--(300:1)--(0:1);
	\draw[shift={(90:{sqrt(3)})}] (0:1)--(60:1)--(120:1)--(180:1)--(240:1)--(300:1)--(0:1);
	\draw[shift={(150:{sqrt(3)})}](0:1)--(60:1)--(120:1)--(180:1)--(240:1)--(300:1)--(0:1);
	\draw[shift={(210:{sqrt(3)})}](0:1)--(60:1)--(120:1)--(180:1)--(240:1)--(300:1)--(0:1);
	\draw[shift={(270:{sqrt(3)})}] (0:1)--(60:1)--(120:1)--(180:1)--(240:1)--(300:1)--(0:1);
	\draw[shift={(330:{sqrt(3)})}](0:1)--(60:1)--(120:1)--(180:1)--(240:1)--(300:1)--(0:1);

	\node (a) at (0,0) {1};
	\node (a) at (30:{sqrt(3)}) {2};
	\node (a) at (90:{sqrt(3)}) {3};
	\node (a) at (150:{sqrt(3)}) {4};
	\node (a) at (210:{sqrt(3)}) {5};
	\node (a) at (270:{sqrt(3)})  {6};
	\node (a) at (330:{sqrt(3)}) {7};
\end{scope}		
\end{scope}	

\begin{scope}[shift={(0:6)}]
\begin{scope}[shift={(-90:{3*sqrt(3)})}]
	\draw (0:1)--(60:1)--(120:1)--(180:1)--(240:1)--(300:1)--(0:1);
	
	\draw[shift={(30:{sqrt(3)})}](0:1)--(60:1)--(120:1)--(180:1)--(240:1)--(300:1)--(0:1);
	\draw[shift={(90:{sqrt(3)})}] (0:1)--(60:1)--(120:1)--(180:1)--(240:1)--(300:1)--(0:1);
	\draw[shift={(150:{sqrt(3)})}](0:1)--(60:1)--(120:1)--(180:1)--(240:1)--(300:1)--(0:1);
	\draw[shift={(210:{sqrt(3)})}](0:1)--(60:1)--(120:1)--(180:1)--(240:1)--(300:1)--(0:1);
	\draw[shift={(270:{sqrt(3)})}] (0:1)--(60:1)--(120:1)--(180:1)--(240:1)--(300:1)--(0:1);
	\draw[shift={(330:{sqrt(3)})}](0:1)--(60:1)--(120:1)--(180:1)--(240:1)--(300:1)--(0:1);

	\node (a) at (0,0) {1};
	\node (a) at (30:{sqrt(3)}) {2};
	\node (a) at (90:{sqrt(3)}) {3};
	\node (a) at (150:{sqrt(3)}) {4};
	\node (a) at (210:{sqrt(3)}) {5};
	\node (a) at (270:{sqrt(3)})  {6};
	\node (a) at (330:{sqrt(3)}) {7};
\end{scope}		
\end{scope}	

\end{tikzpicture}
\caption{A regular hexagonal tesselation with the diameter of the hexagons slightly less than 1 provides a 7-colouring of the Euclidean plane. } \label{fig:HexTiling}
\end{figure}

The analogous problem for the hyperbolic plane was asked by
Matthew Kahle on Math Overflow \cite{kahle12}, and listed as Problem P in~\cite{kloeckner15}.
Precisely, he asks,
\begin{quote}
	Let $\hyp$ be the hyperbolic plane (with constant curvature $-1$), and
	let $d>0$ be given. If $\hyp(d)$ is the graph on vertex set $\hyp$, in which two
	points are joined with an edge when they have distance $d$, then what is
	the chromatic number $\chi(\hyp(d))$?
\end{quote}

Note that unlike in the Euclidean case, the curvature of the hyperbolic plane
makes the problem different for each value of $d$.
This problem appears to be comparable in difficultly to the original Hadwiger-Nelson
problem. Indeed, the best known lower
bound is still $4$, again given by the Moser spindle. The most efficient colourings,
given in \cite{kloeckner15} and \cite{parlier17}, 
come from a hyperbolic checkerboard construction, and the number of colours
increases linearly with $d$: for large enough $d$, we have
\[
	4 \leq \chi(\hyp(d)) \leq 5 \left( \lceil \frac{d}{\log 4} \rceil + 1 \right).
\]

Our work focusses on the lower bound, about which nothing has
been published as far as we know. Our result is the following.

\begin{theorem}\label{thm:main}
Let $d > 0$ be given and suppose that $C_1, \dots, C_k$ are measurable
subsets of $\Hyp$ (with respect to the Haar measure)
such that $\Hyp = \cup_{i} C_i$ and such that no $C_i$
contains a pair of points at distance $d$. Then $k \geq 6$ provided that
$d$ is sufficiently large.
\end{theorem}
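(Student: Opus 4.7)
The plan is to lift the finite-graph Hoffman eigenvalue bound to the measurable setting on $\Hyp$ and combine it with the known decay of the spherical functions as $d\to\infty$. The isometry group $G=\PSL$ acts transitively on $\Hyp$, so the distance-$d$ ``adjacency operator'' is convolution with the uniform probability measure on the sphere of radius $d$; on the spherical Plancherel side its spectrum is given by the values $\varphi_\lambda(d)$, $\lambda\ge 0$, of the Harish-Chandra spherical functions $\varphi_\lambda(r)=P_{-1/2+i\lambda}(\cosh r)$. Writing $m(d)=\inf_{\lambda\ge 0}\varphi_\lambda(d)$, the target estimate is the Hoffman-type bound
\[
	\bar\delta(C) \;\le\; \frac{-m(d)}{1-m(d)}
\]
for every measurable $d$-independent set $C\subseteq\Hyp$, where $\bar\delta(C)$ is an appropriate $G$-equivariant upper density.

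I would proceed in three steps. \textbf{(i)} Fix a notion of upper density $\bar\delta(C)$ (for instance $\limsup_{R\to\infty}\mathrm{vol}(C\cap B_R)/\mathrm{vol}(B_R)$ for a basepoint, or the value of $\one_C$ under a $K$-invariant mean on the orbit closure of $\one_C$ in $L^\infty(\Hyp)$) and verify the additivity $\sum_i\bar\delta(C_i)\ge 1$ in any measurable partition $\Hyp=\bigcup_{i=1}^k C_i$; hence some color class $C_j$ has $\bar\delta(C_j)\ge 1/k$. \textbf{(ii)} To such a $C$ associate the radial autocorrelation $g_C(r)$, i.e.\ the $K$-averaged density of pairs in $C\times C$ at distance $r$; this is a $K$-bi-invariant positive-definite function on $G$, hence a radial positive-definite function on $\Hyp$ with $g_C(0)=\bar\delta(C)$ and $g_C(d)=0$. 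Expanding $g_C$ against the spherical Plancherel measure and applying Bochner's theorem for $\Hyp$ yields the displayed inequality (this is exactly the continuous analogue of the finite Hoffman computation). \textbf{(iii)} Use the uniform pointwise bound $|\varphi_\lambda(d)|\le \varphi_0(d)=P_{-1/2}(\cosh d)$, which is immediate from the $K$-integral representation of $\varphi_\lambda$ by the triangle inequality, together with the classical asymptotic $\varphi_0(d)=O(d\,e^{-d/2})$. Therefore $|m(d)|\to 0$, so $-m(d)/(1-m(d))<1/5$ for all $d$ larger than some explicit threshold. Combining (i)--(iii) forces $1/k\le\bar\delta(C_j)<1/5$, i.e.\ $k\ge 6$.

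\textbf{Main obstacle.} The delicate step is (i). Since $G=\PSL$ is non-amenable, there is no fully $G$-invariant mean on $L^\infty(\Hyp)$, so the additivity $\sum_i\bar\delta(C_i)\ge 1$ does not come for free from Markov--Kakutani. Two workable remedies are (a) to use ball-averaged upper densities throughout, with the Lebesgue density theorem together with $K$-averaging (averaging over rotations around each centre) producing the radial autocorrelation in step (ii), or (b) to reduce to a compact quotient $\Gamma\backslash\Hyp$ of injectivity radius much larger than $d$, apply the finite-dimensional Hoffman bound there, and let the quotients exhaust $\Hyp$; in either route, arranging that the density, the autocorrelation, and its spherical Bochner decomposition remain compatible is the real work. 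Step (iii) is classical, but the proof gains a great deal by noting that the required uniform-in-$\lambda$ bound follows at once from the triangle inequality on the $K$-integral formula, reducing the analysis to the well-studied asymptotics of the single Legendre function $\varphi_0$.
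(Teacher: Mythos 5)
There is a genuine gap, and it sits exactly where you flag your ``main obstacle'': steps (i)--(ii) cannot be carried out as described, and your step (iii) is calibrated to the wrong normalization. Your displayed inequality $\bar\delta(C)\le -m(d)/(1-m(d))$ has the constant function (spectral value $1$) playing the role of the top of the spectrum. That is correct on $\R^n$, where the mean ergodic theorem over F\o lner sets guarantees that the autocorrelation of a set of density $\delta$ carries mass at least $\delta^2$ at the trivial character. On $\Hyp$ the isometry group is non-amenable: the constant function is not in $L^2$, ball averages carry a non-negligible boundary contribution, and there is no reason the Bochner/Godement decomposition of $g_C$ assigns mass $\ge\bar\delta(C)^2$ to the trivial representation. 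Neither of your remedies repairs this: ball-averaged densities are not known to be subadditive in the required sense, and a colouring of $\Hyp(d)$ does not descend to a compact quotient $\Gamma\backslash\Hyp$ (nor do independent sets correspond), so the finite Hoffman bound downstairs says nothing about $\Hyp(d)$. The decisive sanity check is this: since $m(d)\to 0$, your inequality would give $\chi_m(\Hyp(d))\ge (1-m(d))/(-m(d))\to\infty$, which is precisely Open Problem 1 of the paper. An argument that proves the stated theorem only by proving something the authors explicitly could not prove should be treated as broken until the offending step is justified.

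The fix is to abandon the density formulation and use the $L^2$ chromatic form of the Hoffman bound (Theorem~\ref{thm:hoffman}): $\chi_{A_d}\ge 1-M(A_d)/m(A_d)$ with $M(A_d)=\sup_{\|f\|_2=1}\langle A_df,f\rangle$ and $m(A_d)=\inf_{\|f\|_2=1}\langle A_df,f\rangle$. Via the Helgason--Fourier transform these are $\sup_{\lambda\in\R}\varphi_\lambda(z_d)=\varphi_0(z_d)$ and $\inf_{\lambda\in\R}\varphi_\lambda(z_d)$, and the crucial point you discard in step (iii) is that the numerator $M(A_d)=\varphi_0(z_d)$ is \emph{not} $1$ but is itself $O(de^{-d/2})$, decaying at the same rate as $|m(A_d)|$. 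The whole content of the proof is then the asymptotics of the ratio: writing $\varphi_\lambda(z_d)=\frac{\sqrt2}{\pi}\int_0^1 f_d(v)\cos(v\lambda d)\,dv$ with $f_d(v)=d/\sqrt{\cosh d-\cosh(vd)}$ and showing that the normalized kernel $f_d/\int_0^1 f_d$ converges in $L^1$ to the constant $1$, one gets $m(A_d)/M(A_d)\to\min_{s}\frac{\sin s}{s}=\nu\approx-0.217$, hence a Hoffman bound tending to $1-1/\nu\approx 5.6$, which is what yields $k\ge 6$ (and no more). Your uniform bound $|\varphi_\lambda(d)|\le\varphi_0(d)$ and the classical decay of $\varphi_0$ are true but insufficient: they control $M$ and $m$ separately, whereas the theorem requires the limit of their ratio.
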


When working with infinite graphs having a measurable structure,
the smallest possible value of $k$ in Theorem~\ref{thm:main} is called the
\emph{measurable chromatic number}, in order to distinguish it from the honest
chromatic number. Falconer~\cite{falconer1981} in 1981 proved that if the colour
classes in a proper
colouring of $\R^2$ are assumed to be Lebesgue measurable, then at least five colours
are required. Theorem~\ref{thm:main} can therefore be regarded as a
Falconer-type result about the measurable chromatic number of the hyperbolic plane.

While Falconer's argument can be carried out in the hyperbolic setting, it unfortunately only
yields $k \geq 5$. Our result is an application of the spectral method for sparse infinite
graphs, a relatively new approach which has seen several successes in the past decade.
In the series of articles \cite{bachoc09}, \cite{oliveira09}, \cite{filho+vallentin:10},
the authors compute upper bounds on the
maximum density of a distance-$1$ avoiding subset of Euclidean space $\R^n$
for several values of $n$,
which in turn produce lower bounds on the measurable chromatic number.
The current best known upper bound in the $n=2$ case, again obtained via
the spectral method, is given in~\cite{keleti2016}.
In~\cite{bachoc2015}, the authors apply the spectral method to
produce the first asymptotic improvement to
the upper bounds for distance-$1$ avoiding sets
since~\cite{frankl-wilson81} and~\cite{raigorodskii2000chromatic}.

One may ask the analogous question for the sphere. Namely, what is the largest
possible surface measure of a subset of $S^{n-1} \subset \R^n$ not containing
any pair of points lying at some prescribed angle $\theta$? For $\theta = \pi/2$,
this is known as the Witsenhausen problem, first stated in~\cite{witsenhausen74}.
A general spectral approach
for computing upper bounds in the spherical case for any $n$ and $\theta$
is given in~\cite{bachoc09}, and this was refined in~\cite{decorte2015}
and~\cite{decorte2015thesis}
to give the first progress in over 40 years on the $n=3$ case of the Witsenhausen problem.

In~\cite{oliveira+vallentin13} the authors use the spectral method to prove
a quantitative version of Steinhaus's theorem for a large family of Riemannian manifolds,
which roughly speaking says that if one adds distances tending to zero
one at a time to a forbidden distance set, then the maximum density of a set avoiding
all the distances also tends to zero.

The spectral method can basically be regarded as an extension
to infinite graphs of Hoffman's eigenvalue bound~\cite{hoffman1970eigenvalues}
(or, in more sophisticated examples,
the Lov\'asz~$\vartheta$-function~\cite{lovasz79} bound and refinements thereof)
for the independence number or the chromatic number of a graph.

The proof of Theorem~\ref{thm:main} essentially consists of estimating the Hoffman
bound for the graphs $\Hyp(d)$.
This estimation is however not straightforward, and is in fact the main content of
this article.
This appears to be the first application of the spectral method in the sparse, non-compact,
non-abelian setting. It is also one of the only computer-free applications, and one
of the only applications in which a family of bounds is analyzed asymptotically.
A brief review of the spectral method in general will be given in
Section~\ref{sec:hoffmangeneral}; for a detailed treatment, we refer the reader to~\cite{bachoc14}.

This article is organized as follows: In Section \ref{sec:prelim}, we provide
the reader with some essential background in hyperbolic geometry, and in harmonic
analysis on the hyperbolic plane. We also briefly review the required parts
of the spectral method for sparse infinite graphs. The proof of Theorem~\ref{thm:main}
is given in Section~\ref{sec:proof} in a series of lemmas.
In Section~\ref{sec:openproblems} we present some open problems.

%we show the results of some numerical computations of Hoffman's
%bound for the measurable chromatic number of $\Hyp(d)$ for various distances $d$,
%and we present some open problems.

\section{Preliminaries}\label{sec:prelim}

\subsection{Hyperbolic Plane}

%We give a very brief introduction to the geometry of the hyperbolic plane, primarily with the aim to set notations and to provide those not familiar with the subject with some intuition. There are several models of the hyperbolic plane. 

%\subsection*{Regular Hyperbolic Tilings}
%
%As a regular hexagon tiles the Euclidean plane, regular hyperbolic polygons with the number of sides greater than $6$ tile the hyperbolic plane. However, in the hyperbolic plane a triangle is fully defined by its angles, and hence the regular hyperbolic polygon is fully defined by the number of sides. The diameter of the regular hyperbolic $n$-gon grows logarithmically with the number of sides. More precisely, it's equal to $2arccosh(cotan(\pi/n)/\sqrt{3})$. Therefore, such tilings provide an upper bound on the chromatic number which is exponential in distance.

While several models of the hyperbolic plane are available, we shall make use only
of the Poincar\'e disk model because our computations can be carried out most easily there. A detailed introduction to the geometry of hyperbolic plane can be found in, for example,~\cite{helgason1984groups,katok1992fuchsian}.
%Our main reference the basics of the geometry of the hyperbolic plane in this model, as well as the harmonic analysis on it will be~\cite{helgason1984groups}. Another reference for the geometry of the hyperbolic plane is~\cite{katok1992fuchsian}, where the author prefers the upper half-plane model. The hyperbolic plane can also be viewed as a symmetric space, and for this aspect, we refer to~\cite{lubotzky1994discrete, lang1975sl2}.

%\paragraph{Metric}
Let $D = \{z=x+iy\in\C \mid |z| < 1\}$ be the open unit disc on the complex plane and let
$O$ denote the origin. As usual, $\overline{z}$ stands for the complex conjugate of $z$, and $|z|$ for the absolute value of $z$. Endowing $D$ with the Riemannian measure
%$$
%ds^2 = \frac{4dz^2}{(1-|z|^2)^2} = \frac{4(dx^2 +dy^2)}{(1-x^2-y^2)^2}.
%$$
\[
	dz = 4(1-x^2-y^2)^{-2}~dx~dy
\]
makes it a model of the hyperbolic plane $\Hyp$ with constant sectional curvature $-1$. (In~\cite{helgason1984groups}, the author omits the constant $4$, thus working in curvature $-4$. This implies rather minor differences in some formulae.)
The geodesics in this metric are the straight lines and arcs orthogonal to the boundary of $D$, the unit circle $B = \{z\in\C \mid |z|=1\}$. 
%The constant 4 in the definition guarantees that the curvature of the plane is constant -1 (which usually is the standard). The metric considered in~\cite{helgason1984groups} differs from the defined above by the factor of 4, making the curvature equal to -4. This issue however implies rather minor changes to the geometry, and harmonic analysis.

The hyperbolic distance from $O$ to a point $z \in D$ is equal to $\ln\left(\frac{1+|z|}{1-|z|}\right)$. The hyperbolic circle of radius $d$ with centre $O$ is the Euclidean circle of radius $\tanh(\frac{d}{2}) = \frac{e^d-1}{e^d+1}$ centred at $O$. The circumference of a hyperbolic
circle of radius $d$ is $2\pi \sinh(d)$, while the area is equal to $2\pi (\cosh(d)-1)$. Note that both the circumference and the area of a hyperbolic circle grows exponentially with radius.

Unlike the Euclidean plane, the sum of the angles of a hyperbolic triangles is always smaller than $\pi$. Moreover, the area of a triangle is equal to its angle deficit, i.e., $\pi$ minus the sum of the angles. A triangle is fully defined by its angles up to translations and reflections. Hence, in particular, it is not scalable. A triangle can also have a vertex at infinity, in this case the angle at this vertex is equal to 0. A triangle having all its vertices at infinity is called ideal. 

For a point on the plane $z\in D$ and a point on the boundary $b\in B$, let us denote by $H(z,b)$ a Euclidean circle in $D$ passing through $z$ and tangent to the boundary at $b$. It is called a horocycle. For any $z \in D$ a horocycle $H(z,b)$ is orthogonal to any geodesic emerging from $b$. Two horocycles $H(z_1,b)$ and $H(z_2,b)$ cut segments of equal length on geodesics emerging from $b$. In a way, horocycles are similar to hyperplanes orthogonal to a family of parallel lines in a Euclidean space. For $z\in D$ and $b\in B$, we denote by $\langle z,b\rangle$ the hyperbolic distance from the origin $O$ to the horocycle $H(z,b)$, if $O$ lies outside $H(z,b)$, and minus the distance, if $O$ lies inside it. The bracket $\langle z,b \rangle$ plays a crucial role in the definition of the Helgason-Fourier transform given below.

%\paragraph{Group action} 
Let $G$ be the group defined by
$$
G = SU(1,1) = \left\{ \left( \begin{array}{ccc}
a & b \\
\overline{b} & \overline{a}  \end{array} \right) \in M_2(\mathbb{C})\mid |a|^2 - |b|^2 = 1 \right\},
$$
where $M_2(\C)$ is the set of $2 \times 2$ matrices over $\C$, with the usual matrix multiplication as the group operation.
%\comment{Say what the group operation is on $G$.}

We have the following group action $G \times D \to D$ of $G$ on $D$:
$$
g\cdot z = \left( \begin{array}{ccc}
a & b \\
\overline{b} & \overline{a}  \end{array} \right) \cdot z = \frac{az+b}{\overline{b}z+\overline{a}},
$$
for $g\in G$ and $z \in D$. This action is transitive and preserves the hyperbolic metric.
The subgroup of rotations
$$
K = \left\{ \left( \begin{array}{ccc}
a & 0\\
0 & \overline{a}  \end{array} \right) \in M_2(\mathbb{C})\mid |a|^2  = 1 \right\} \simeq SO(2)\subseteq G.
$$
is the stabilizer of the origin $O$. Hence $\Hyp$ can be identified with the coset space $G/K$.
We will use $dg$ and $dk$ to denote, respectively, the Haar measures on
$G$ and $K$, normalized so that $\int_K dk=1$. Then
$$
\int_G f(g\cdot O) dg = \int_\Hyp f(z)dz,
$$
whenever the integral exists.\hide{\footnote{p. 38 Helgason}}

\subsection{Helgason-Fourier Transform}

Mainly following~\cite[Section 0.4]{helgason1984groups}, we give a brief summary of harmonic
analysis on the hyperbolic plane. Note that the metric in~\cite{helgason1984groups} differs from the one above by the factor of 4, and hence some of the formulae differ from those in
that book.

%For every $z \in \Hyp$ and $b \in B$, there is a unique horocycle $H(z,b)$ which passes through $z$ and tangent to $B$ at $b$. We define the symbol $\langle z, b \rangle$
%to be the signed hyperbolic distance from $O$ to $\xi$, which we take to be negative if
%$O$ belongs to the interior of $\xi$.

As before we use the Riemannian measure
\[
	dz = 4(1-x^2-y^2)^{-2}~dx~dy
\]
on $D$. If $f$ is a complex-valued function on $D$,
then its \emph{Helgason-Fourier transform} $\widehat{f}(\lambda,b)$ is
the function on $\mathbb{C}\times B$ defined by
$$
\widehat{f}(\lambda,b) = \int_D f(z) e^{(-i\lambda+1/2)\left<z,b\right>} dz,
$$
for all $(\lambda,b)\in \mathbb{C}\times B$, for which the integral exists.
If $\mu$ is a finite Borel measure on $G$, we similarly define
\begin{align*}%\label{eq:fourier-measure}
	\widehat{\mu}(\lambda, b) = \int_G e^{(-i\lambda+1/2)\left<g\cdot O,b\right>}~d\mu(g)
\end{align*}
Notice that $\widehat{\mu}$ is always a continuous function.

\hide{
We say the function $f$ is \emph{radial} if $f(z)$ depends only on the distance from $z$ to $O$.
If $f$ is radial, then its Helgason-Fourier transform is constant with respect to $b$,
and so we rewrite it as
$$
\widehat{f}(\lambda) = \int_\Hyp f(z) \varphi_{-\lambda}(z)dz,
$$
where $\varphi_\lambda(z) := \int_B e^{(i\lambda+1/2)\langle z,b\rangle}db$.
The functions $\varphi_\lambda$ are called \emph{spherical functions}.
}

We say the finite Borel measure $\mu$ on $D$ is \emph{radial} if $\mu(k\cdot X) = \mu(X)$
for all $k \in K$ and all Borel subsets $X$ of $D$.
If $\mu$ is radial, then its Helgason-Fourier transform is constant with respect to $b$,
and so we rewrite it as
$$
\widehat{\mu}(\lambda) = \int_G \varphi_{-\lambda}(g\cdot O)~d\mu(g),
$$
where $\varphi_\lambda(z) := \int_B e^{(i\lambda+1/2)\langle z,b\rangle}db$.
The functions $\varphi_\lambda$ are called \emph{spherical functions}.

If $f_1,f_2$ are two functions on $D$, then their \emph{convolution product}
is defined (\cite[p.~43]{helgason1984groups}) as
$$
(f_1\ast f_2)(g \cdot O) = \int_G f_1(h \cdot O) f_2(h^{-1} g \cdot O)~dh
$$
when the integral exists.

Convolution of functions with measures can also be defined:
If $\mu$ is a finite Borel measure on $G$, then the convolution of $f$ by $\mu$
on the right is defined as
\begin{align*} %\label{eq:measure-conv}
	(f * \mu)(g \cdot O) = \int_G f(h^{-1} g \cdot O)~d\mu(h)
\end{align*}
for each $g \in G$ for which the integral is defined.
One can prove (using, for instance, Minkowski's inequality for integrals)
that $f*\mu \in L^2(D)$ whenever $f \in L^2(D)$. 

%We say that $\mu$ is \emph{radial} if $\mu(k \cdot X) = \mu(X)$
%for all $k \in K$ and all Borel subsets $X$ of $D$; equivalently,
%$\int_G f(kh\cdot O)~d\mu(h) = \int_G f(h\cdot O)~d\mu(h)$ for all $k \in K$
%and all continuous, compactly supported functions $f$ on $D$.
%Using this equivalence, one can show that \eqref{eq:measure-conv} actually
%defines a function on $D$ when $\mu$ is radial.

%\comment{Question: To which space does $f * \mu$ belong?}

The following theorem gives us an important property of the Helgason-Fourier transform
which we will need in the following sections.

%The space $L^2(D)$ is defined with respect to the Riemannian measure $dz$.

\begin{theorem}[{\cite[Theorem 4.2(iii)]{helgason1984groups}}]\label{thm:plancherel}
The map $f\mapsto \hat{f}$ extends to an isometry from $L^2(D)$ onto 
$$
L^2\left(\mathbb{R}\times B, \frac{1}{4 \pi} \lambda \tanh\left(\frac{\pi}{2}\lambda\right) d\lambda db \right).
$$
\end{theorem}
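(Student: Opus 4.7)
The plan is to prove this Plancherel/Parseval statement via the standard reduction from the rank-one symmetric space $\Hyp = G/K$ to a family of one-dimensional Jacobi-function transforms, exploiting the natural $K$-action on both $D$ and $B$. First I would decompose $L^2(D) = \bigoplus_{n \in \mathbb{Z}} L^2_n(D)$ into $K$-isotypic components, where $f \in L^2_n(D)$ satisfies $f(k_\alpha \cdot z) = e^{in\alpha} f(z)$ for every $k_\alpha \in K$. Parametrising $b = e^{i\theta} \in B$, the $K$-covariance identity $\langle k_\alpha \cdot z,\, k_\alpha \cdot b\rangle = \langle z, b\rangle$, combined with the substitution $z \mapsto k_\alpha \cdot z$ inside the defining integral for $\widehat{f}$, shows that whenever $f \in L^2_n(D)$ the transform $\widehat{f}(\lambda, \cdot)$ lies entirely in the $n$-th Fourier mode $\C \cdot e^{in\theta}$ of $L^2(B)$. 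Hence the Helgason-Fourier transform is block-diagonal with respect to this decomposition, and it suffices to verify isometry on each summand separately.

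Second, in polar coordinates $z = k_\alpha \cdot z_r$ with $z_r$ at hyperbolic distance $r$ from $O$, an element of $L^2_n(D)$ has the form $f(z) = e^{in\alpha} g(r)$ for some $g$ in $L^2$ with respect to the radial weight $2\pi \sinh r\, dr$. The restricted Helgason-Fourier transform reduces to a one-variable integral $g \mapsto F_n$ against an associated spherical function $\Phi_{n,\lambda}(r)$, which is a Jacobi function, i.e., an eigenfunction of the radial part of the Laplace-Beltrami operator $\Delta$ acting on the $n$-th $K$-type with eigenvalue $-(\lambda^2+1/4)$; for $n=0$ this is the classical spherical (Mehler-Fock) transform with kernel $\varphi_\lambda$. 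I would then apply Weyl-Titchmarsh spectral theory on $(0, \infty)$ to obtain the Plancherel measure of this Sturm-Liouville problem: its density is proportional to $|c_n(\lambda)|^{-2}$, with the Harish-Chandra $c$-function $c_n(\lambda)$ read off from the large-$r$ asymptotics $\Phi_{n,\lambda}(r) \sim c_n(\lambda) e^{(i\lambda-1/2)r} + c_n(-\lambda) e^{(-i\lambda-1/2)r}$. Using the explicit formulas for $c_n$ on $SU(1,1)$ in terms of $\Gamma$-values, together with the reflection identity $\Gamma(z)\Gamma(1-z) = \pi/\sin(\pi z)$, one extracts the universal factor $\lambda\tanh(\pi\lambda/2)$; any residual $n$-dependent constants are absorbed when the $n$-contributions are reassembled against the orthonormal basis $\{e^{in\theta}/\sqrt{2\pi}\}_{n \in \mathbb{Z}}$ of $L^2(B)$, producing the claimed Plancherel density $\tfrac{1}{4\pi}\lambda\tanh(\pi\lambda/2)\, d\lambda\, db$.

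The main obstacle is pinning down the precise multiplicative constant $\tfrac{1}{4\pi}$ and verifying that the $n$-dependent pieces combine to a density that is genuinely independent of $b$. The outcome is highly sensitive to the normalisations of the measures $dg$, $dk$, $dz$, and $db$, so tracking these consistently is the most error-prone part of the argument; a useful sanity check is to test the claimed Parseval identity on a single explicit function such as a radial eigenfunction of $\Delta$ concentrated near $O$, or on a compactly supported bump obtained from the Cartan decomposition. Finally, surjectivity onto the target $L^2$ space, once isometry has been established on a dense subspace such as $C_c^\infty(D)$, follows by approximation together with a Paley-Wiener-type density argument for the image of $C_c^\infty(D)$ inside the weighted $L^2$ space on $\mathbb{R}\times B$.
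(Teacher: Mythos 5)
First, a point of context: the paper does not prove this statement at all --- it is quoted (with the normalisation adjusted from curvature $-4$ to curvature $-1$) directly from Helgason, so there is no in-paper proof to compare against. Your outline is the standard ``per-$K$-type'' route: decompose $L^2(D)$ into $K$-isotypic summands, reduce each to a one-dimensional Jacobi-function (Sturm--Liouville) transform, and read the spectral density off the Harish-Chandra $c$-function. That is a legitimate strategy, but it is not the one Helgason uses: his argument first proves the inversion formula for the radial ($n=0$) case only, then reduces a general $f\in C_c^\infty(D)$ to that case by radialising, $f^\natural(z)=\int_K f(k\cdot z)\,dk$, and translating, and finally obtains the Plancherel identity by polarisation. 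That reduction is what lets him avoid doing Weyl--Titchmarsh theory for every $K$-type, which is precisely where your sketch accumulates its debt.

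The genuine gap is the sentence claiming that ``any residual $n$-dependent constants are absorbed'' when the $K$-types are reassembled. That is not a bookkeeping step; it is the theorem. The Sturm--Liouville spectral density for the $n$-th radial problem is proportional to $|c_n(\lambda)|^{-2}$, and to obtain a single density independent of $b$ you must prove that $|c_n(\lambda)|=|c_0(\lambda)|$ for all real $\lambda$ and all $n$. This is true for $SU(1,1)$, because the ratio $c_0(\lambda)/c_n(\lambda)$ is a finite product of factors of the form $\bigl(k+\tfrac12+i\lambda\bigr)/\bigl(k+\tfrac12-i\lambda\bigr)$, each of modulus one for $\lambda\in\R$; but this must be exhibited from the explicit $\Gamma$-function formulas, and without it your argument only produces a block-diagonal isometry with a priori different weights on each block. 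A second, smaller problem is the surjectivity claim: over the full line $\R$ the image of $L^2(D)$ is \emph{not} all of $L^2\bigl(\R\times B,\tfrac{1}{4\pi}\lambda\tanh(\tfrac{\pi}{2}\lambda)\,d\lambda\,db\bigr)$ but the proper closed subspace cut out by the functional equation relating $\widehat f(\lambda,\cdot)$ and $\widehat f(-\lambda,\cdot)$ (Helgason states the surjectivity over $\lambda>0$ with the density doubled), so no density or Paley--Wiener argument can close that step as you have set it up. Since the paper only ever uses the Parseval identity $\|\widehat f\|_2=\|f\|_2$, this does not affect anything downstream, but your proposed proof of ``onto'' as literally stated would fail.
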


\hide{
\begin{theorem}[{\cite[p. 48]{helgason1984groups}}]
Let $f, \phi \in L^2(D)$ and suppose that $\phi$ is radial. Then
\[
	\widehat{f * \phi}(\lambda, b) = \widehat{f}(\lambda, b)~\widehat{\phi}(\lambda).
\]
\end{theorem}
}

The next theorem partially recovers a familiar relationship between the
usual Fourier transform and convolution product which holds on abelian groups.

\begin{theorem}\label{thm:conv-mult}
Suppose $f \in L^2(D)$ and that $\mu$ is a radial finite Borel measure on
$G$. Then we have
\[
	\widehat{f * \mu}(\lambda, b) = \widehat{f}(\lambda, b)~\widehat{\mu}(\lambda).
\]
%where
%$\widehat{\mu}(\lambda) := \int_{G} e^{(-i\lambda+\frac{1}{2})\left<g\cdot O,b\right>} d\mu(g)$.
%(The point $b \in B$ can be chosen arbitrarily.)
\end{theorem}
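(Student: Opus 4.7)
The plan is direct computation via Fubini, the horocyclic cocycle identity, and the bi-$K$-invariance that ``radial'' entails for a measure on $G$. I begin by unfolding the definitions,
\[
\widehat{f * \mu}(\lambda,b) \;=\; \int_G\!\int_G f(h^{-1} g\cdot O)\, e^{(-i\lambda+1/2)\langle g\cdot O,\,b\rangle}\, d\mu(h)\, dg,
\]
and swap the order of integration by Fubini. This is legitimate because $\mu$ is a finite measure, $f\in L^2(D)$ ensures $f\ast\mu\in L^2$ via Minkowski's integral inequality, and the modulus of the Helgason kernel is the Poisson kernel $e^{(1/2)\langle z,b\rangle}$, which behaves well against the finite measure $\mu$.

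Next, I exploit left-invariance of the Haar measure $dg$ by substituting $g\mapsto hg$ in the inner integral, which decouples $f$ from $h$, and then invoke the standard horocyclic cocycle identity
\[
\langle hg\cdot O,\,b\rangle \;=\; \langle g\cdot O,\, h^{-1}\!\cdot b\rangle \,+\, \langle h\cdot O,\,b\rangle,
\]
which follows from the geometric interpretation of $\langle z,b\rangle$ as the signed distance from $O$ to the horocycle through $z$ at $b$. This splits the exponent as a sum of a $g$- and an $h$-piece. The inner $g$-integral is then recognized as $\widehat{f}(\lambda,\,h^{-1}\!\cdot b)$, leaving
\[
\widehat{f*\mu}(\lambda,b) \;=\; \int_G e^{(-i\lambda+1/2)\langle h\cdot O,\,b\rangle}\, \widehat{f}(\lambda,\,h^{-1}\!\cdot b)\, d\mu(h).
\]

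The main obstacle is the concluding step: producing the advertised factorization $\widehat{f}(\lambda,b)\,\widehat{\mu}(\lambda)$, since $h$ still enters the integrand non-trivially through both the phase and the rotated argument $h^{-1}\!\cdot b$. The remedy is to use the bi-$K$-invariance of $\mu$ to insert a $K$-average via the substitution $h\mapsto kh$ for $k\in K$. Applying the cocycle identity again to rewrite $\langle kh\cdot O,b\rangle = \langle h\cdot O,k^{-1}\!\cdot b\rangle$ (using $k\cdot O=O$), and using that $K$ acts transitively on $B$ with Haar $dk$ pushing forward to the normalized arc-length measure $db'$, the $K$-integral of the exponential becomes
\[
\int_B e^{(-i\lambda+1/2)\langle h\cdot O,\,b'\rangle}\, db' \;=\; \varphi_{-\lambda}(h\cdot O).
\]
After this manipulation the $h$-integral decouples as $\widehat{f}(\lambda,b)\cdot \int_G \varphi_{-\lambda}(h\cdot O)\, d\mu(h)$, which is $\widehat{f}(\lambda,b)\,\widehat{\mu}(\lambda)$. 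The delicate point is orchestrating the $K$-average so that the cocycle identity and the transitive $K$-action on $B$ together restore the full $b$-dependence of $\widehat{f}$ while simultaneously converting the twisted phase into the spherical function $\varphi_{-\lambda}$; getting the substitutions in the right order is what makes the factorization emerge cleanly.
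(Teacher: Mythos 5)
Your argument tracks the paper's own computation step for step up to the point where radiality must enter: unfold the definitions, apply Fubini, substitute $g \mapsto hg$, and split the exponent with the cocycle identity $\langle hg\cdot O, b\rangle = \langle g\cdot O, h^{-1}\cdot b\rangle + \langle h\cdot O, b\rangle$, arriving at
\[
\widehat{f*\mu}(\lambda,b) \;=\; \int_G \widehat{f}(\lambda,\, h^{-1}\cdot b)\, e^{(-i\lambda+1/2)\langle h\cdot O,\, b\rangle}\, d\mu(h).
\]
This much agrees with the paper. The problem is your concluding $K$-averaging step, which does not do what you claim. When you substitute $h \mapsto kh$ and integrate over $k \in K$, the rotation $k^{-1}$ lands on the boundary direction in \emph{both} factors: the phase becomes $e^{(-i\lambda+1/2)\langle h\cdot O,\, k^{-1}\cdot b\rangle}$, but the other factor becomes $\widehat{f}(\lambda,\, h^{-1}k^{-1}\cdot b)$. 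You cannot average only the exponential into $\varphi_{-\lambda}(h\cdot O)$ while $\widehat{f}(\lambda,b)$ stays put; carrying the $k$-integral out honestly yields
\[
\int_B \widehat{f}(\lambda,\, h^{-1}\cdot b')\, e^{(-i\lambda+1/2)\langle h\cdot O,\, b'\rangle}\, db',
\]
in which the dependence on the original $b$ has been washed out entirely. So the advertised factorization $\widehat{f}(\lambda,b)\,\widehat{\mu}(\lambda)$ does not ``emerge cleanly'' from this manipulation; it does not emerge at all. The gap sits at exactly the one step the whole theorem turns on.

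The structural reason the decoupling resists you is worth seeing. In Helgason's argument for $f*\phi$ with $\phi$ radial, the cocycle identity is arranged so that the $h$-dependent direction $h^{-1}\cdot b$ falls on the transform of the \emph{radial} factor, which is independent of the boundary direction; that transform then exits the integral as the scalar $\widehat{\phi}(\lambda)$, leaving $\widehat{f}(\lambda,b)$ intact. In the display above the roles are reversed: the rotated direction sits on $\widehat{f}$, which is not direction-independent, and no amount of $K$-averaging of $\mu$ alone repairs that. To be fair, the paper's own proof is terse at precisely this line --- the step marked $(*)$ simply replaces $h^{-1}\cdot b$ by $b$ with the words ``$\mu$ is radial'' --- and your instinct that this is the delicate point is correct. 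But the fix is to set up the convolution and the cocycle identity so that the radial object carries the rotated direction: writing the circle average as $\int_G f(gh\cdot O)\,d\mu(h)$ and substituting $g\mapsto gh^{-1}$, the inner $h$-integral becomes $\widehat{\mu}(\lambda,\, g^{-1}\cdot b)$, which equals $\widehat{\mu}(\lambda)$ by radiality, and the factorization is then immediate. As written, your final step is not a proof.
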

\begin{proof}
(Following {\cite[p. 48]{helgason1984groups}}.)

For any $\lambda \in \C$ and $b \in B$, we have
\begin{align*}
	\widehat{f * \mu}(\lambda, b) &= \int_G \int_G f(h^{-1} g \cdot O)~d\mu(h)
		e^{(-i\lambda + 1/2)\langle g\cdot O,b \rangle}~dg\\
		&= \int_G \int_G f(g \cdot O) e^{(-i\lambda + 1/2) \langle hg\cdot O, b \rangle}~dg~d\mu(h)
\end{align*}
Applying the identity \cite[Eq.~34]{helgason1984groups}
\[
	\langle hg\cdot O,b \rangle = \langle g\cdot O, h^{-1}\cdot b \rangle
	+ \langle h\cdot O, b \rangle,
\]
we obtain
\begin{align*}
	\widehat{f * \mu}(\lambda, b) &=
	\int_G \int_G f(g\cdot O)
	e^{(-i\lambda + 1/2) \langle g\cdot O, h^{-1}\cdot b \rangle
	+ \langle h\cdot O, b \rangle}~dg~d\mu(h)\\
	&= \int_G \left( \int_G  f(g\cdot O)
	e^{(-i\lambda + 1/2) \langle g\cdot O, h^{-1}\cdot b \rangle}~dg \right)
	e^{(-i\lambda + 1/2) \langle h\cdot O, b \rangle}~d\mu(h\\
	&= \int_G \left( \int_G  f(g\cdot O)
	e^{(-i\lambda + 1/2) \langle g\cdot O, \cdot b \rangle}~dg \right)
	e^{(-i\lambda + 1/2) \langle h\cdot O, b \rangle}~d\mu(h))\tag{$*$}\label{eq:radial}\\
	&= \widehat{f}(\lambda, b)~\widehat{\mu}(\lambda),
\end{align*}
where we use the fact that $\mu$ is radial at line \eqref{eq:radial}.

\hide{
\begin{align*}
	\widehat{f * \mu}(\lambda, b) &= \int_G \int_G f(h g^{-1})~d\mu(g)
		~e^{-(i\lambda+1/2)\langle h\cdot O,b \rangle}~dh\\
	&= \int_G \int_G f(h g^{-1}) ~e^{(-i\lambda+1/2)\langle h\cdot O,b \rangle}~dh~d\mu(g)\\
	&= \int_G \int_G f(h) ~e^{(-i\lambda+1/2)\langle hg\cdot O,b \rangle}~dh~d\mu(g)\\
	&= \int_G \int_G f(h) ~e^{(-i\lambda+1/2) \langle g\cdot O, h^{-1}\cdot b \rangle
	+ \langle h\cdot O, b \rangle }~dh~d\mu(g)\\
	&= \int_G \left( \int_G f(h) e^{(-i\lambda+1/2) \langle h\cdot O, b \rangle}\right)
		e^{(-i\lambda+1/2) \langle g\cdot O, h^{-1}\cdot b \rangle}~dh~d\mu(g)\\
	&= \int_G \left( \int_G f(h) e^{(-i\lambda+1/2) \langle h\cdot O, b \rangle}~dh \right)
		e^{(-i\lambda+1/2) \langle g\cdot O, b \rangle}~d\mu(g)\tag{$*$}\label{eq:radial}\\
	&= \widehat{f}(\lambda, b) \int_G
		e^{(-i\lambda+1/2) \langle g\cdot O, b \rangle}~d\mu(g)\\
	&= \widehat{f}(\lambda,b)~\widehat{\mu}(\lambda),
\end{align*}
where we have used the identity \cite[Eq.~34]{helgason1984groups}
\[
	\langle hg\cdot O,b \rangle = \langle g\cdot O, h^{-1}\cdot b \rangle
	+ \langle h\cdot O, b \rangle.
\]
The fact the $\mu$ is radial is used at equation \eqref{eq:radial}. One can rotate
both $g$ and $h^{-1} \cdot b$, until the latter becomes $b$.
}

\end{proof}

\hide{
\begin{theorem}
Let $f,\phi \in L^2(D)$ and $\mu$ be a measure on $\Hyp$, where in addition $\phi$ and $\mu$ are radial, i.e., invariant under the action of $K$.  Then the following equalities holds
$$
\widehat{f\ast\phi}(\lambda,b) = \widehat{f}(\lambda,b)\cdot\widehat{\phi}(\lambda)\text{ and } \widehat{f * \mu}(\lambda, b) =  \widehat{f}(\lambda,b)~\widehat{\mu}(\lambda)
$$
where $\widehat{\mu}(\lambda) = \int_{G} e^{(-i\lambda+\frac{1}{2})\left<g\cdot O,b\right>} d\mu(g)$ (since $\mu$ is radial, $b$ can be chosen arbitrarily on $B$).
\end{theorem}
}

\subsection{The Hoffman bound for the measurable chromatic number of an infinite graph}\label{sec:hoffmangeneral}

The Hoffman bound is a well known spectral bound for the chromatic
number of a finite graph.
In this section we provide the reader the necessary background for understanding
the Hoffman bound applied to infinite graphs detailed in \cite{bachoc14}.
We begin with the usual statement of Hoffman's theorem for the finite context.

\begin{theorem}[\cite{hoffman1970eigenvalues}]\label{thm:hoffman-finite} 
	Let $G$ be a graph with adjacency matrix $A$. Denote the
	maximum (resp.~minimum) eigenvalue of $A$ by $M$
	(resp.~$m$), and denote the chromatic number of $G$
	by $\chi(G)$. Then
	\[
		\chi(G) \geq 1 - \frac{M}{m}.
	\]
\end{theorem}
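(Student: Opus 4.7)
The plan is to reduce Theorem~\ref{thm:hoffman-finite} to the Hoffman \emph{ratio bound} for the independence number, which asserts $\alpha(G) \leq n \cdot \frac{-m}{M-m}$, where $n = |V(G)|$. Once this is in hand, the theorem follows immediately: every color class in a proper $k$-coloring is an independent set of size at most $\alpha(G)$, so
\[
k \;\geq\; \frac{n}{\alpha(G)} \;\geq\; \frac{M-m}{-m} \;=\; 1 - \frac{M}{m}.
\]

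To prove the ratio bound I would first assume $G$ is $d$-regular, so that $M = d$ and $\mathbf{1}$ is an eigenvector of $A$. Given an independent set $S$ of size $\alpha$ with characteristic vector $\chi_S$, decompose orthogonally as $\chi_S = (\alpha/n)\mathbf{1} + \beta$ with $\beta \perp \mathbf{1}$; a short computation gives $\|\beta\|^2 = \alpha(1-\alpha/n)$. Because $S$ contains no edges, $\chi_S^T A \chi_S = 0$, and since $A\mathbf{1} = M\mathbf{1}$ the cross term vanishes, leaving $\beta^T A \beta = -\alpha^2 M / n$. On the other hand $\beta$ lies in $\mathbf{1}^\perp$, where the Rayleigh quotient of $A$ is bounded below by $m$, so $\beta^T A \beta \geq m\,\alpha(1-\alpha/n)$. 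Comparing the two expressions and dividing through by $\alpha$ rearranges to $\alpha/n \leq -m/(M-m)$, which is the ratio bound.

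For non-regular graphs the same strategy goes through after the uniform vector $\mathbf{1}$ is replaced by the positive Perron eigenvector $v$ for $M$ and the indicator $\chi_S$ by the weighted indicator $v \circ \chi_S$ (componentwise product): one decomposes along $v$ rather than $\mathbf{1}$, uses $Av = Mv$ to kill the cross term, and again bounds the orthogonal part below using $m$. The step that needs the most care is verifying that the single equation $\chi_S^T A \chi_S = 0$, once projected onto the orthogonal complement of the top eigenspace and compared with the Rayleigh estimate coming from $m$, really does force $\alpha/n$ to be small; this is the conceptual heart of the argument and is exactly what the later sections will have to replay for $\Hyp(d)$, with the Plancherel theorem (Theorem~\ref{thm:plancherel}) and the convolution identity (Theorem~\ref{thm:conv-mult}) taking over the role played here by the finite-dimensional spectral decomposition of $A$.
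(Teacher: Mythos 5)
The paper does not actually prove this statement; it is quoted from Hoffman's 1970 paper and used as a black box (the result the authors really rely on is its infinite-dimensional extension, Theorem~\ref{thm:hoffman}, which they likewise cite from \cite{bachoc14} without proof). So your proof has to be judged on its own. The regular case of your argument is correct and complete: the orthogonal decomposition of $\chi_S$ along $\mathbf{1}$, the vanishing cross term, and the Rayleigh bound on $\mathbf{1}^\perp$ give exactly the ratio bound, and $\chi \geq n/\alpha$ finishes it.

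The non-regular case as written has a real, though fixable, gap. Replacing $\mathbf{1}$ by the Perron vector $v$ and $\chi_S$ by $w = v\circ\chi_S$ does make the computation go through, but what it yields is the \emph{weighted} ratio bound $\sum_{i\in S}v_i^2 \leq \frac{-m}{M-m}\,\|v\|^2$, not a bound on $|S|/n$. Your displayed chain $k \geq n/\alpha(G) \geq (M-m)/(-m)$ therefore no longer applies: you must instead run the pigeonhole on the weights, i.e.\ observe that the colour classes partition the total weight $\|v\|^2 = \sum_j \sum_{i\in C_j} v_i^2$, so some class carries weight at least $\|v\|^2/k$, and the weighted ratio bound then forces $1/k \leq -m/(M-m)$. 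With that substitution the proof is correct. Two further remarks. First, there is a more standard route that avoids the independence number entirely: for a proper $k$-colouring let $D_j$ be the diagonal matrix with entries $\omega^{j\,c(x)}$, $\omega = e^{2\pi i/k}$; then $\sum_{j=0}^{k-1} D_j A D_j^{*} = 0$ because the diagonal blocks of $A$ vanish, and since each conjugate has the same spectrum as $A$ one gets $M \leq (k-1)(-m)$ directly. Second, and relevant to your closing comment: your reduction through $n/\alpha$ is precisely the step that does \emph{not} survive the passage to $\Hyp(d)$, where the Haar measure is infinite and there is no ``$n$'' to divide by; the infinite version in \cite{bachoc14} necessarily argues directly with the colour classes, in the spirit of the conjugation proof, rather than through a density bound on independent sets.
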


The proof of Theorem \ref{thm:hoffman-finite} can be generalized to infinite
graphs, provided correct analogues of notions such as \emph{adjacency matrix}
and \emph{colouring} can be found. We now give those definitions and state
the infinite version of Hoffman's bound.

Now let $(X, \Sigma, \mu)$ be a measure space.
Write $L^2(X)$ for $L^2(X, \mu)$, we let\\$A~:~L^2(X)\to~L^2(X)$
be a bounded, self-adjoint operator. We say that a measurable subset
$I \subseteq X$ is \emph{$A$-independent} if
\begin{align}\label{eq:ind}
	\langle Af, g \rangle = 0
\end{align}
whenever $f,g \in L^2(X)$ are functions which vanish almost everywhere outside $I$.
When $I$ is an independent vertex subset of a finite graph $G$, and $A$
is the adjacency matrix of $G$, then one easily verifies \eqref{eq:ind} for all $f$ and
$g$ supported on $I$. One may therefore regard the operator $A$ as
playing the role of an adjacency operator.

An \emph{$A$-measurable colouring} is a partition $X = \bigcupdot_i C_i$
of $X$ into $A$-independent sets. In this case the sets $C_i$ are called
\emph{colour classes}. The \emph{$A$-chromatic number of $X$},
denoted $\chi_A(X)$, is the smallest number $k$ (possibly $\infty$), such that
there exists an $A$-measurable colouring using only $k$ colour classes.

Recall that operators on infinite-dimensional Hilbert spaces need not have eigenvectors,
even when they are self-adjoint, so Theorem~\ref{thm:hoffman-finite} does
not immediately extend to the infinite case. It turns out that for our purposes, the correct analogues
of the $M$ and $m$ of Theorem~\ref{thm:hoffman-finite}
are given by the following definitions.
\begin{align*}
	M(A) = \sup_{\|f\|_2 = 1} \langle Af, f \rangle,\\
	m(A) = \inf_{\|f\|_2=1} \langle Af, f \rangle.
\end{align*}

The numbers $\langle Af, f \rangle$ are real since $A$ is self-adjoint,
and $M(A)$ and $m(A)$ are finite since $A$ is bounded.
In \cite{bachoc14}, the following extension of Hoffman's bound is proven.

\begin{theorem}[\cite{bachoc14}]\label{thm:hoffman}
	Let $(X, \Sigma, \mu)$ be a measure space and suppose\\
	$A~:~L^2(X)\to~L^2(X)$ is a nonzero, bounded, self-adjoint operator.
	Then
	\[
		\chi_A(X) \geq 1 - \frac{M(A)}{m(A)}.
	\]
\end{theorem}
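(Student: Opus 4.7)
The plan is to imitate the classical finite-dimensional proof of Hoffman's bound by building, from the colouring, a finite Hermitian matrix whose numerical range lies in $[m(A),M(A)]$, whose trace vanishes, and whose top Rayleigh quotient is close to $M(A)$; a short trace argument will then yield the bound. The statement is trivial when $m(A)\ge 0$ (the right-hand side is $\le 1$) or when $\chi_A(X)=\infty$, so I would assume $m(A)<0$ and that $X=\bigsqcup_{i=1}^k C_i$ is a given $A$-measurable colouring with $k<\infty$, and aim to show $k\ge 1-M(A)/m(A)$.

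The first step is to compensate for the absence of a top eigenvector of $A$: since $M(A)$ is only defined as a supremum, for each $\varepsilon>0$ I pick $f\in L^2(X)$ with $\|f\|_2=1$ and $\langle Af,f\rangle>M(A)-\varepsilon$. Decompose $f$ along the colouring by $f_i:=f\cdot\one_{C_i}$, set $a_i:=\|f_i\|_2$ and, for the indices $i\in I:=\{i:a_i>0\}$, set $w_i:=f_i/a_i$. Because the $C_i$ partition $X$, the family $(w_i)_{i\in I}$ is orthonormal in $L^2(X)$ and $\sum_{i\in I}a_i^2=1$. The $A$-independence hypothesis forces $\langle Aw_i,w_i\rangle=0$ for every $i\in I$, the feature that will pin the trace to zero.

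The main step is to form the $|I|\times|I|$ Hermitian matrix $B$ with $B_{ij}=\langle Aw_j,w_i\rangle$ for $i\ne j$ and $B_{ii}=0$, and to verify three properties. (i)~Every Rayleigh quotient of $B$ equals $\langle Au,u\rangle$ for some unit $u=\sum_{i\in I}c_iw_i\in L^2(X)$, so the spectrum of $B$ lies in $[m(A),M(A)]$. (ii)~$\operatorname{tr}(B)=0$ by construction. (iii)~The real unit vector $a=(a_i)_{i\in I}\in\R^{|I|}$ realises the Rayleigh quotient $\langle Ba,a\rangle=\sum_{i,j}a_ia_j\langle Aw_i,w_j\rangle=\langle Af,f\rangle>M(A)-\varepsilon$, so $\lambda_{\max}(B)>M(A)-\varepsilon$. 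The remaining $|I|-1$ eigenvalues of $B$ are each at least $m(A)$ and sum to $-\lambda_{\max}(B)<-(M(A)-\varepsilon)$, giving $(|I|-1)m(A)<-(M(A)-\varepsilon)$; dividing by the negative quantity $m(A)$ flips the inequality to $k\ge|I|>1-(M(A)-\varepsilon)/m(A)$, and letting $\varepsilon\to 0$ closes the argument.

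The main obstacle is exactly this absence of a top eigenvector: the finite-dimensional proof starts from a unit vector achieving $\lambda_{\max}$, but in the infinite-dimensional setting it must be replaced by an $\varepsilon$-near maximiser whose approximation error has to be tracked carefully through the sign-sensitive trace manipulation involving the negative number $m(A)$.
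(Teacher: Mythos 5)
Your argument is correct. The paper itself does not prove this theorem---it is quoted from \cite{bachoc14}---and your proof is essentially the standard one from that reference: compress $A$ to the finite Hermitian matrix $B_{ij}=\langle Aw_j,w_i\rangle$ built from the normalized restrictions of an $\varepsilon$-near maximizer to the colour classes, observe that $\operatorname{tr}(B)=0$ by $A$-independence, that the spectrum of $B$ lies in $[m(A),M(A)]$, and that $\lambda_{\max}(B)>M(A)-\varepsilon$, then run the usual trace inequality and let $\varepsilon\to 0$. All the delicate points are handled properly: the orthonormality of the $w_i$ (disjoint supports), the restriction to the index set $I$ of classes meeting $f$ nontrivially so that $k\geq|I|$, the sign flip when dividing by $m(A)<0$, and the reduction to the case $m(A)<0$ (when $m(A)\geq 0$ the bound is vacuous, and indeed the statement should be read as implicitly assuming $m(A)<0$, since otherwise $M(A)/m(A)$ is undefined or the inequality is trivial). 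A minor stylistic alternative that avoids eigenvalues of $B$ altogether is to expand $\sum_{i=1}^{k}\langle A(f-kf_i),\,f-kf_i\rangle=-k\langle Af,f\rangle$ and bound each summand below by $m(A)\,\|f-kf_i\|_2^2$, which gives the same conclusion in one line; but your route is equally valid and mirrors the finite-dimensional Hoffman proof more transparently.
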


It should be noted that unlike in Theorem~\ref{thm:hoffman-finite} where
$A$ is simply the adjacency matrix, with infinite graphs there may be no
canonical choice for $A$.
The choice of $A$ determines which sets are to be considered as ``independent'',
and thus admissible as colour classes. When applying Theorem~\ref{thm:hoffman}
for an infinite graph $G$ on a measurable vertex set,
one therefore typically chooses $A$ to define a class of independent sets which is larger that the class of true measurable independent sets of $G$, for then the measurable chromatic
number of $G$ is at least $\chi_A(X)$. In most known interesting examples,
$A$-independent sets need not be honest independent sets in $G$.

Also note that Theorem~\ref{thm:hoffman} applies equally well
when $X$ is finite. Theorem~\ref{thm:hoffman} then says that
to obtain a lower bound on the chromatic number of a finite graph, one may
optimize over symmetric matrices satisfying \eqref{eq:ind}.

\section{Proof of the Main Theorem}\label{sec:proof}

\subsection{Adjacency on the hyperbolic plane}\label{sub:adj}

As before, for $d>0$ let $\Hyp(d)$ be the graph whose vertex set is the
hyperbolic plane $\Hyp$, where two points are joined with an edge precisely
when their distance is equal to $d$.
For any $f \in L^2(D)$, $d>0$, and $x \in \Hyp$, we define $(A_d f )(x)$ to be the
average of $f$ around the hyperbolic circle of radius $d$ centred at $x$. It is not hard to check
that $A$ is self-adjoint, and bounded with norm $1$. The operator $A_d$ can be thought
of as an adjacency operator for the graph $\Hyp(d)$, in the sense that any
measurable independent set of $\Hyp(d)$ is also $A_d$-independent.

If $C$ denotes the circle of radius $d$ centred at $O$, then
one can write $A_d f$ as $f * \mu$, where $\mu$ is the uniform probability
measure on the inverse image $p^{-1}(C)$ of $C$ under the canonical projection
map $p : G \to \Hyp$, given by $p(g) = g \cdot O$. One can also construct
$\mu$ as the Haar measure on the double coset
\[
	K\left( \begin{array}{ccc}
	\cosh(d/2) & \sinh(d/2)\\
	\sinh(d/2) & \cosh(d/2)  \end{array} \right)K.
\]

%One can write $A_d f$ as $f * \mu$, where $\mu$ a uniform probability measure on the circle of radius $d$ around the origin on $D$. As a measure on $G$, $\mu$ is a uniform probability measure on the double coset $K\left( \begin{array}{ccc}
%\cosh(d/2) & \sinh(d/2)\\
%\sinh(d/2) & \cosh(d/2)  \end{array} \right)K$.

%\comment{Explain how to choose $\mu$.}
%
%Namely, let $\mu_d$ be a uniform measure on the circle of radius $d$ around the origin on $\Hyp$. Then 
%$$
%(A_d f)(h) = (f * \mu_d)(h).
%$$

This allows us to apply Theorems~\ref{thm:conv-mult} and \ref{thm:plancherel}
for computing $M(A_d)$ and $m(A_d)$. Namely,
\begin{align*}
M(A_d) & = \sup_{\|f\|_2 = 1} \langle A_df, f \rangle = \sup_{\|f\|_2 = 1} \langle f*\mu_d, f \rangle\\
 & = \sup_{\|\widehat{f}\|_2 = 1} \langle \widehat{f}\cdot\widehat{\mu_d}, \widehat{f} \rangle = 
 \sup_{\lambda\in\R}\widehat{\mu_d}(\lambda),
\end{align*}
By definition,
$$
 \sup_{\lambda\in\R}\widehat{\mu_d}(\lambda) =  \sup_{\lambda\in\R}\int_{G} e^{(-i\lambda+\frac{1}{2})\left<g\cdot O,b\right>} d\mu_d(g) = \sup_{\lambda\in\R} \varphi_{-\lambda}(z_d),
$$
where $z_d\in\Hyp$ is any point at distance $d$ from the origin, and the $\varphi_\lambda$
are the spherical functions.
Similarly $m(A_d) = \inf_{\lambda\in\R} \varphi_{-\lambda}(z_d)$. 

%$\varphi_{\lambda} (z) = \int_B e^{(\frac{1}{2}+i\lambda)\left< z,b\right>}db$ is a spherical function.

\subsection{Hoffman calculation}

In this section we prove the Theorem~\ref{thm:main}, which is a direct corollary of the following one.

\begin{theorem}\label{thm:limit}
	Let $\nu =  \min_{s\in\R} \frac{\sin s}{s}$. Then
	\[
	\lim_{d \to \infty} \left( 1 - \frac{M(A_d)}{m(A_d)} \right)
	= 1 - 1/\nu \approx 5.6.
	\]
\end{theorem}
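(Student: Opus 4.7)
The plan is to exploit the spectral formula $M(A_d) = \sup_{\lambda \in \R} \varphi_\lambda(z_d)$, $m(A_d) = \inf_{\lambda \in \R} \varphi_\lambda(z_d)$ derived in Section~\ref{sub:adj} (noting that $\varphi_{-\lambda}(z_d) = \varphi_\lambda(z_d)$, since the integrand defining $\varphi_\lambda(z_d)$ is symmetric under $\theta \mapsto -\theta$ once one places $z_d$ on the positive real axis), and to reduce the asymptotic analysis of this sup and inf to optimizing $\sin(s)/s$ over $s \in \R$. Starting from $\varphi_\lambda(z) = \int_B e^{(i\lambda + 1/2)\langle z,b\rangle}\,db$, the classical identity $e^{\langle z,b\rangle} = (1 - |z|^2)/|z-b|^2$ applied with $z_d = \tanh(d/2)$ and $b = e^{i\theta}$ gives the Mehler--Dirichlet type representation
$$\varphi_\lambda(z_d) = \frac{1}{\pi}\int_0^\pi \bigl[\cosh d - \sinh d \cos\theta\bigr]^{-1/2 - i\lambda}\,d\theta,$$
which in particular shows that $\varphi_\lambda(z_d)$ is real.

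Next I would analyze this integral for large $d$. Writing $\cosh d - \sinh d\cos\theta = \tfrac{e^d}{2}(1-\cos\theta) + \tfrac{e^{-d}}{2}(1+\cos\theta)$ and zooming in on $\theta \approx 0$ via the substitution $u = e^{d/2}\sin(\theta/2)$, one recovers the Harish-Chandra type expansion
$$\varphi_\lambda(z_d) = e^{-d/2}\bigl[c(\lambda) e^{i\lambda d} + c(-\lambda) e^{-i\lambda d}\bigr] + O\bigl(e^{-3d/2}\bigr)$$
for $\lambda \in \R \setminus \{0\}$, where $c(\lambda)$ has a simple pole at $0$ with $c(\lambda) \sim A/(i\lambda)$ for an explicit positive constant $A$, and satisfies $c(-\lambda) = \overline{c(\lambda)}$. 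Expanding $c$ near zero and combining the two exponentials collapses the bracket to $2A\sin(\lambda d)/\lambda + O(1)$, so that, uniformly for $|\lambda d| \leq R$ with $R$ fixed,
$$\varphi_\lambda(z_d) = 2A\,d\,e^{-d/2}\cdot\frac{\sin(\lambda d)}{\lambda d} + o\bigl(d\,e^{-d/2}\bigr).$$

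Setting $s = \lambda d$, the rescaled quantity $(2Ade^{-d/2})^{-1}\varphi_{s/d}(z_d)$ converges to $\sin(s)/s$. Together with the complementary estimate $|\varphi_\lambda(z_d)| = O(e^{-d/2})$ for $|\lambda|$ bounded away from $0$ (which follows from the same Harish-Chandra expansion on any compact set avoiding $\lambda = 0$, since $c(\lambda)$ is then bounded), this forces the sup and inf to be attained in the window $|\lambda| = O(1/d)$ and yields
$$\frac{M(A_d)}{2Ade^{-d/2}} \longrightarrow \sup_{s\in\R}\frac{\sin s}{s} = 1, \qquad \frac{m(A_d)}{2Ade^{-d/2}} \longrightarrow \inf_{s\in\R}\frac{\sin s}{s} = \nu.$$
The common prefactor cancels in the ratio, giving $M(A_d)/m(A_d) \to 1/\nu$ and hence the claim. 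The main obstacle is controlling the error in the asymptotic uniformly in $\lambda$ on the growing scale $|\lambda| \leq R/d$, and simultaneously establishing the off-zero decay $|\varphi_\lambda(z_d)| = O(e^{-d/2})$ uniformly for $|\lambda| \geq \varepsilon$; the trivial bound $|\varphi_\lambda(z_d)| \leq \varphi_0(z_d) \sim 2Ade^{-d/2}$ is off by a factor of $d$ and so does not suffice on its own to rule out sup/inf contributions from large $|\lambda|$.
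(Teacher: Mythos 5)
Your overall mechanism is the right one --- the limit $\sin(s)/s$ does emerge from the rescaling $s=\lambda d$ of the spherical function, and your normalization is consistent (one can check $M(A_d)\sim \frac{2}{\pi}\,d\,e^{-d/2}$, i.e.\ $2A=2/\pi$). But as written the argument has two genuine gaps, one of which you flag yourself. First, the uniform expansion $\varphi_{s/d}(z_d) = 2A\,d\,e^{-d/2}\bigl(\sin(s)/s + o(1)\bigr)$ for $|s|\le R$ is exactly where the Harish-Chandra $c$-function machinery does not come for free: the classical asymptotics $\varphi_\lambda(z_d)\sim e^{-d/2}\bigl(c(\lambda)e^{i\lambda d}+c(-\lambda)e^{-i\lambda d}\bigr)$ are uniform only for $\lambda$ in compact subsets of $\R\setminus\{0\}$, and the error terms degrade as $\lambda\to 0$ precisely because $c$ has a pole there; you would need to re-derive the expansion with errors controlled uniformly on the shrinking window $|\lambda|\le R/d$, a nontrivial piece of analysis that the proposal defers rather than performs. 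Second, your two regimes --- $|\lambda d|\le R$ and $|\lambda|\ge\varepsilon$ --- leave the intermediate range $R/d\le|\lambda|\le\varepsilon$ uncovered; to rule out sup/inf contributions there you would additionally need a bound of the form $|\varphi_\lambda(z_d)|\le C e^{-d/2}/|\lambda|$ valid uniformly down to $|\lambda|=R/d$ (so that this range contributes at most $(2C/R)\,d\,e^{-d/2}$, negligible against $|\nu|\cdot 2A\,d\,e^{-d/2}$ once $R$ is large), and that again requires the same uniform control near the pole.

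The paper avoids all of this with an elementary device you may want to adopt. By Lemma~\ref{lm:7}, $\varphi_\lambda(z_d) = \frac{\sqrt2}{\pi}\int_0^1 f_d(v)\cos(v\lambda d)\,dv$ with $f_d>0$, so $M(A_d)$ is attained exactly at $\lambda=0$; dividing by $\int_0^1 f_d(u)\,du$ and setting $s=\lambda d$ gives $\varphi_{s/d}(z_d)/M(A_d) = \int_0^1 F_d(v)\cos(vs)\,dv$ with $F_d$ a probability density on $[0,1]$. Lemma~\ref{lm:l1} shows $\|F_d - 1\|_{L^1}\to 0$, and since $|\cos(vs)|\le 1$ this yields $\bigl|\varphi_{s/d}(z_d)/M(A_d) - \sin(s)/s\bigr|\le \|F_d-1\|_{L^1}$ uniformly over \emph{all} $s\in\R$ at once --- no case split, no $c$-function, and the pole at $\lambda=0$ never appears. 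Your route can presumably be made rigorous, but completing it amounts to reproving this uniform statement by harder means.
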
 

\noindent In the subsection~\ref{sub:adj},
we showed that in order to find the Hoffman bound for the operator $A_d$, we need to calculate the global extrema of the function $\widehat{\mu_d}(\lambda) = \varphi_{-\lambda}(z_d)$. We start with the following lemma regarding spherical functions.

\begin{lemma}\label{lm:7} For $\lambda\in\R$ and $d>0$, the spherical function satisfies
$$
\varphi_{\lambda} (z_d) = \int_B e^{(1/2+i\lambda)\left< z,b\right>}db = \frac{\sqrt{2}}{\pi} \int_{0}^{1} f_d(v)\cos{(v\lambda d)}dv,
$$
where $z_d$ is any point at hyperbolic distance $d$ from the origin, and $$f_d(v) = \frac{d}{\sqrt{\cosh{(d)}-\cosh{(vd)}}}.$$
\end{lemma}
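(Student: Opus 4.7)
The plan is to compute the integral defining $\varphi_{\lambda}(z_d)$ in the Poincar\'e disk model by choosing a convenient representative $z_d$ on the real axis and performing a sequence of substitutions that turn the angular integral over $B$ into the integral over $v \in [0,1]$ of the stated form.

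First I would use rotational invariance of $\varphi_{\lambda}$ to take $z_d = \tanh(d/2)$, and parametrize $b = e^{i\theta}$ with $db = d\theta/(2\pi)$. The key input is the explicit formula for the horocycle bracket in terms of the Poisson kernel, $e^{\langle z, b\rangle} = (1-|z|^2)/|z-b|^2$. Plugging in $z = \tanh(d/2)$ and multiplying numerator and denominator by $\cosh^2(d/2)$, the hyperbolic half-angle identities collapse the expression to the clean form
\[
\langle z_d, e^{i\theta}\rangle = -\ln\bigl(\cosh d - \sinh d \cos\theta\bigr).
\]
Using evenness in $\theta$, this rewrites the spherical function as
\[
\varphi_{\lambda}(z_d) = \frac{1}{\pi}\int_0^{\pi} r(\theta)^{-1/2 - i\lambda}\, d\theta, \qquad r(\theta) := \cosh d - \sinh d\cos\theta.
\]

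Next I would change variables from $\theta$ to $r$, noting that $r$ sweeps monotonically across $[e^{-d}, e^d]$ and that $\sinh d\, \sin\theta = \sqrt{(e^d - r)(r - e^{-d})}$ after using $\sinh^2 d - (\cosh d - r)^2 = -(r-e^d)(r-e^{-d})$. The integral becomes
\[
\varphi_{\lambda}(z_d) = \frac{1}{\pi}\int_{e^{-d}}^{e^d} \frac{r^{-1/2 - i\lambda}}{\sqrt{(e^d - r)(r - e^{-d})}}\, dr.
\]
A second substitution $r = e^s$, $s \in [-d,d]$, is the linchpin: the algebraic identity
\[
(e^d - e^s)(e^s - e^{-d}) = 2 e^{s}\bigl(\cosh d - \cosh s\bigr)
\]
cancels the $e^{s/2}$ factors coming from $r^{-1/2}$ and $dr = e^s\, ds$ against the $\sqrt{2e^s}$ from the denominator, leaving
\[
\varphi_{\lambda}(z_d) = \frac{1}{\pi\sqrt{2}}\int_{-d}^d \frac{e^{-i\lambda s}}{\sqrt{\cosh d - \cosh s}}\, ds.
\]
Finally, since the weight is even in $s$ and $\lambda$ is real, the imaginary part drops and folding the integral onto $[0,d]$ contributes a factor of $2$; then the rescaling $s = vd$ produces exactly the claimed formula with $f_d(v) = d / \sqrt{\cosh d - \cosh(vd)}$.

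I do not expect serious obstacles beyond bookkeeping: the sole nontrivial step is the factorization identity $(e^d - e^s)(e^s - e^{-d}) = 2 e^{s}(\cosh d - \cosh s)$, which is what makes all the half-powers of $e^s$ match up. One small sanity check worth doing is the normalization of the bracket: with the metric $dz = 4(1-|z|^2)^{-2}\, dx\, dy$ used here (curvature $-1$), the Poisson-kernel identity $e^{\langle z, b\rangle} = (1-|z|^2)/|z-b|^2$ is the correct one to pair with the exponent $(1/2 + i\lambda)\langle z,b\rangle$ appearing in the definition of the Helgason--Fourier transform, which is consistent with the Plancherel density $\tfrac{1}{4\pi}\lambda\tanh(\tfrac{\pi}{2}\lambda)\, d\lambda\, db$ stated in Theorem~\ref{thm:plancherel}.
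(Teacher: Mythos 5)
Your proposal is correct and follows essentially the same route as the paper: the paper cites Helgason's formula $\varphi_\lambda(z_d)=\frac{1}{2\pi}\int_{-\pi}^{\pi}(\cosh d-\sinh d\cos\theta)^{-(1/2+i\lambda)}\,d\theta$ as its starting point (where you rederive it from the Poisson kernel), and then performs the same chain of substitutions — $\theta\mapsto u=\cosh d-\sinh d\cos\theta$ (done in two steps in the paper), then $u=e^{v}$ using the factorization $2u\cosh d-1-u^2=2u\bigl(\cosh d-\tfrac{u+u^{-1}}{2}\bigr)$, then evenness and rescaling. All your intermediate identities check out, including the normalization $e^{\langle z,b\rangle}=(1-|z|^2)/|z-b|^2$ in the curvature $-1$ convention.
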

\begin{proof}

The equation~\cite[Introduction \S 4, Eqs.(19-20)]{helgason1984groups} reads as
$$
\varphi_{\lambda} (z) = \int_B e^{(1/2+i\lambda)\left< z,b\right>}db = \frac{1}{2\pi} \int_{-\pi}^{\pi} \left(\cosh{d} - \sinh{d}\cos{\theta}\right)^{-(1/2+i\lambda)} d\theta.
$$

Via the substitution $t = \cos{\theta}$ one gets 
$$
\frac{1}{\pi} \int_{0}^{\pi} \left(\cosh{d} - \sinh{d}\cos{\theta}\right)^{-(1/2+i\lambda)} d\theta = 
 \frac{1}{\pi} \int_{-1}^{1} \left(\cosh{d} - t\sinh{d}\right)^{-(1/2+i\lambda)} \frac{dt}{\sqrt{1-t^2}},
$$
and then by the substitution $u = \cosh{d} - t\sinh{d}$ and $v=\ln{u}$,
%\begin{align*}
%\varphi_{\lambda} (z) & = \frac{1}{\pi} \int_{\cosh{d}+\sinh{d}}^{\cosh{d}-\sinh{d}} u^{-(\frac{1}{2}+i\lambda)} \frac{1/\left(-\sinh{d}\right)}{\sqrt{1-\left(\frac{\cosh(d)-u}{\sinh{d}}\right)^2}} du \\
%& = \frac{1}{\pi} \int_{\cosh{d}-\sinh{d}}^{\cosh{d}+\sinh{d}} \frac{u^{-(\frac{1}{2}+i\lambda)}}{\sqrt{2u\cosh{d}-1-u^2}}du 
%\end{align*}
\begin{align*}
\varphi_{\lambda} (z) & = \frac{1}{\pi} \int_{\cosh{d}-\sinh{d}}^{\cosh{d}+\sinh{d}} \frac{u^{-(1/2+i\lambda)}}{\sqrt{2u\cosh{d}-1-u^2}}du = \frac{1}{\sqrt{2}\pi} \int_{e^{-d}}^{e^d} \frac{u^{-1-i\lambda}}{\sqrt{\cosh{d}- \frac{u+u^{-1}}{2}}} du \\
&= \frac{1}{\sqrt{2}\pi} \int_{-d}^{d} \frac{e^{-i v \lambda}}{\sqrt{\cosh{d}-\cosh{v}}}dv = \frac{\sqrt{2}}{\pi} \int_{0}^{1} d \frac{\cos{(vd\lambda)}}{\sqrt{\cosh{d}-\cosh{dv}}}dv,
\end{align*}
where the last equality holds, since the function $f_d(v)$ is even.
\end{proof}

Note that the function $f_d(v)$ is positive on $[0,1)$ for all $d$, and hence $\widehat{\mu_d}(\lambda)$ achieves its maximum at $\lambda=0$, and
$$
M(A_d) =  \frac{\sqrt{2}}{\pi} \int_{0}^{1} f_d(v)dv.
$$
Denote
$$
F_d(v) = \frac{f_d(v)}{\int_0^1 f_d(u)~du}.
$$

\noindent We proceed with a lemma.

\begin{lemma}\label{lm:l1}
The functions $F_d$ converge in $L^1$ norm to the constant function $1$ on $[0,1)$ as $d \to \infty$.
\end{lemma}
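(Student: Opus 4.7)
The plan is to apply Scheff\'{e}'s lemma. Each $F_d$ is by construction a probability density on $[0,1)$ (nonnegative with $\int_0^1 F_d \, dv = 1$), and the candidate limit $1$ is itself a probability density there, so it suffices to establish pointwise convergence $F_d(v) \to 1$ almost everywhere on $[0,1)$; the $L^1$ convergence then follows from Scheff\'{e} automatically.

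\textbf{Pointwise behaviour of $f_d$.} For each fixed $v \in [0,1)$ I would use
\[
\cosh d - \cosh(vd) = \tfrac{1}{2}\bigl(e^d + e^{-d} - e^{vd} - e^{-vd}\bigr) \sim \tfrac{1}{2}e^d \qquad (d \to \infty),
\]
since the three subdominant terms are all of smaller exponential order than $e^d$. This immediately yields $f_d(v) \sim d\sqrt{2}\, e^{-d/2}$.

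\textbf{Asymptotics of the normalizer.} To evaluate $\int_0^1 f_d(v)\,dv$ I would substitute $s = d(1-v)$ and apply the product-to-sum identity
\[
\cosh d - \cosh(d - s) = 2 \sinh\!\bigl(d - s/2\bigr)\sinh(s/2) = \tfrac{1}{2} e^d\bigl(1 - e^{-(2d-s)}\bigr)\bigl(1 - e^{-s}\bigr),
\]
which converts the normalizer into
\[
\int_0^1 f_d(v)\,dv = \frac{\sqrt{2}}{e^{d/2}}\int_0^d \frac{ds}{\sqrt{\bigl(1 - e^{-(2d-s)}\bigr)\bigl(1 - e^{-s}\bigr)}}.
\]
For $s \in [0,d]$ the factor $1 - e^{-(2d-s)}$ lies in $[1 - e^{-d},\, 1]$, so up to a multiplicative error $1 + O(e^{-d})$ it can be replaced by $1$. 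The remaining integral $\int_0^d ds/\sqrt{1 - e^{-s}}$ has an integrable singularity at $s=0$ (integrand $\sim 1/\sqrt{s}$) and integrand tending to $1$ as $s \to \infty$, so $\int_0^d ds/\sqrt{1 - e^{-s}} = d + O(1)$. Combining, $\int_0^1 f_d(v)\,dv \sim d\sqrt{2}\, e^{-d/2}$, matching the pointwise asymptotics of $f_d$, and hence $F_d(v) \to 1$ pointwise on $[0,1)$.

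\textbf{Main obstacle.} The main technical difficulty is that $f_d$ is not dominated by any single integrable function on $[0,1)$: as $d$ grows, the mass of $f_d$ concentrates near $v = 1$, so one cannot apply dominated convergence directly to justify the asymptotics of $\int_0^1 f_d$. The substitution $s = d(1 - v)$ unrolls this concentrating behaviour onto the expanding interval $[0,d]$, where the integrand does admit uniform control and behaves linearly in $d$ — this is exactly what is needed so that the pointwise limit of $f_d(v)$ and the growth of $\int_0^1 f_d$ cancel in the ratio defining $F_d$. Once that pointwise limit is established, Scheff\'{e}'s lemma delivers the $L^1$ convergence with no further work.
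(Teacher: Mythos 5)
Your proof is correct, and it takes a genuinely different route from the paper's. The paper never computes the asymptotics of $f_d$ or of its integral; instead it rewrites $F_d(v)$ as $\bigl(\int_0^1 \sqrt{(\cosh d - \cosh(dv))/(\cosh d - \cosh(du))}\,du\bigr)^{-1}$, establishes the uniform dominating bound $F_d(v) \leq 1/\sqrt{1-v}$ for all $d$ via a concavity argument (the right-hand side of the equivalent inequality $1-v \leq (\cosh d - \cosh(dv))(\int_0^1 (\cosh d - \cosh(du))^{-1/2}du)^2$ is concave in $v$, so it suffices to check the endpoint $v=0$), and then applies dominated convergence twice. You instead prove pointwise convergence $F_d \to 1$ directly, by showing both $f_d(v)$ and $\int_0^1 f_d$ are asymptotic to $d\sqrt{2}\,e^{-d/2}$ (the substitution $s=d(1-v)$ and the product-to-sum identity are carried out correctly, and $\int_0^d ds/\sqrt{1-e^{-s}} = d+O(1)$ holds since $(1-e^{-s})^{-1/2}-1$ is integrable on $(0,\infty)$), and then you get $L^1$ convergence for free from Scheff\'e's lemma because each $F_d$ and the limit are probability densities. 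Your route trades the paper's concavity trick for a sharper computation of the normalizer; both are legitimate, and Scheff\'e is arguably the cleaner way to pass from pointwise to $L^1$ here. One quibble with your commentary (not your proof): the claim that $f_d$ admits no uniform integrable dominating function is false --- the bound $\cosh d - \cosh(dv) \geq \tfrac12 d^2(1-v^2)$ from the Taylor series gives $f_d(v) \leq \sqrt{2}/\sqrt{1-v^2}$ for all $d$, which is in fact the opening step of the paper's proof, and the mass of $F_d$ does not concentrate near $v=1$ (that is exactly what the lemma rules out). The real reason domination of $f_d$ alone does not suffice is that it only yields $\int_0^1 f_d \to 0$, whereas the ratio defining $F_d$ requires the precise rate, which is what your substitution supplies.
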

\begin{proof}
	First note that the functions $f_d$ are integrable. Indeed,
	examing the Taylor series of $\cosh$, one sees that
	\[
		f_d(v) = \frac{d}{\sqrt{\cosh(d) - \cosh(dv)}} \leq \frac{\sqrt{2}}{\sqrt{1-v^2}}
	\]
	for all $v \in [0,1)$.	
	We claim that
	\[
		F_d(v) = \frac{f_d(v)}{\int_0^1 f_d(u)~du} \leq \frac{1}{\sqrt{1-v}}
	\]
	for all $v \in [0,1)$, and all $d>0$. To see this, notice that the claim is
	equivalent to
	\begin{align}\label{eq:cosh}
		1-v \leq ( \cosh(d) - \cosh(dv)) \left( \int_0^1
		\frac{1}{\sqrt{\cosh(d) - \cosh(du)}}~du \right)^2.
	\end{align}
	Differentiating the righthand side of \eqref{eq:cosh} in $v$ twice gives
	\[
		-d^2 cosh(dv)  \left( \int_0^1
		\frac{1}{\sqrt{\cosh(d) - \cosh(du)}}~du \right)^2,
	\]
	which is always negative. So to prove the claim,
	it suffices to check $\eqref{eq:cosh}$ at $v=0$. But
	\[
		(\cosh(d) -1) \left( \int_0^1 \frac{1}{\sqrt{\cosh(d) - \cosh(du)}}~du \right)^2
		= \left( \int_0^1 \sqrt{\frac{\cosh(d)-1}{\cosh(d)-\cosh(du)}}~du \right)^2
		\geq 1,
	\]
	since the expression under the integral is always at least $1$.
	
	Now, applying dominated convergence twice, we get
	\begin{align*}
		\lim_{d \to \infty} \bigintssss_0^1 \left| \frac{f_d(v)}{\int_0^1 f_d(u)~du} - 1\right|~dv
		&= \lim_{d \to \infty} \bigintsss_0^1 \left|
		\left(\bigintssss_0^1 \sqrt{ \frac{\cosh(d) - \cosh(dv)}{\cosh(d) - \cosh(du)}}~du
		\right)^{-1} - 1\right|~dv \\
		&= \bigintsss_0^1 \lim_{d \to \infty} \left|
		\left(\bigintssss_0^1 \sqrt{ \frac{\cosh(d) - \cosh(dv)}{\cosh(d) - \cosh(du)}}~du
		\right)^{-1} - 1\right|~dv \\
		&= \bigintsss_0^1 \left|
		\left(\bigintssss_0^1 \lim_{d \to \infty}
		\sqrt{ \frac{\cosh(d) - \cosh(dv)}{\cosh(d) - \cosh(du)}}~du \right)^{-1} - 1\right|~dv\\
		&= \bigintsss_0^1 \left|
		\left(\bigintssss_0^1 \lim_{d \to \infty}
		\sqrt{ \frac{1 - \frac{\cosh(dv)}{\cosh(d)}}{1 - \frac{\cosh(du)}{\cosh(d)}}}~du
		 \right)^{-1} - 1\right|~dv,
	\end{align*}
	which is equal to $0$.
\end{proof}

\begin{proof}[Proof of Theorem \ref{thm:limit}]
For each $d>0$, let $s_d$ be the number which minimizes the expression
\[
	\int_0^1 f_d(v) \cos(s_d v)~dv
	= \bigintsss_0^1 \frac{d \cos(s_d v)}{\sqrt{\cosh(d) - \cosh(dv)}}~dv,
\]
and let $\rho$ be the unique positive minimizer for the expression
$\int_0^1 \cos(\rho v)~dv = \frac{\sin(\rho)}{\rho}$;
then $\nu =  \min_{s\in\R} \frac{\sin s}{s} = \int_0^1 \cos(\rho v)~dv \approx -0.217$.

It follows from Lemma \ref{lm:l1} that $s_d \to \rho$, by noting that
for any $\e > 0$ and for sufficiently large $d$, we have
\begin{align*}
	\left| \frac{\int_0^1 f_d(v) \cos(s v)~dv}{\int_0^1 f_d(v)~dv}
	- \int_0^1 \cos(s v)~dv \right|
	\leq \bigintssss_0^1 \left| \frac{f_d(v)}{\int_0^1 f_d(v)~dv} - 1 \right|~dv < \e
\end{align*}
for all $s$. Therefore $M(A_d)/m(A_d) \to 1/\nu$, and
the theorem follows.
\end{proof}

\hide{
\section{Numerical simulations}\label{sec:numerical}

Numerical simulations in Wolfram Mathematica show that the bound $1-\frac{M(A_d)}{m(A_d)}$ is monotonically increasing in $d$ and converges to the limit rather fast. Namely, for $d=4$ it is strictly larger than $4$, and for $d=12$ it is exceeds $5$. In other words, the measurable chromatic number is at least $6$ for as small value of $d$ as $12$. In the figure, you can see the graph of the bound for $1\leq d <200$.

\begin{figure}[H]
\centering 
	\includegraphics[scale=0.3]{explicit-plot.jpg}
\caption{The Hoffman bound for $A_d$ for $1\leq d <200$.} \label{fig:Explicilt-Plot}
\end{figure}
}

\begin{remark}\label{rmk:wolfram}
Numerical simulations in Wolfram Mathematica show that the bound $1-\frac{M(A_d)}{m(A_d)}$ is monotonically increasing in $d$ and converges quickly to the limit. In fact, for $d=4$ it is strictly larger than $4$, and for $d=12$ it is exceeds $5$. In other words, the measurable chromatic number is at least $6$ for values of $d$ as small as $12$. 
\end{remark}

\section{Open problems}\label{sec:openproblems}

We list here a few problems which are still open as far as we know.

\begin{enumerate}
\item Since the best known colourings of $\Hyp(d)$ use a number of colours growing linearly
in $d$, the following question is natural: Could our method be improved to give
$\chi_m(\Hyp(d)) \to \infty$  as $d\to\infty$?

\item Can a similar approach be used to find
lower bounds for $\chi_m(\Hyp^n(d))$, the measurable chromatic numbers of higher-dimensional
hyperbolic spaces?

\item Can the lower bound for the honest chromatic number, $\chi(\Hyp(d))$,
be improved from $4$, for any $d > 0$? This is actually Problem P from \cite{kloeckner15}.

\item As mentioned in Remark~\ref{rmk:wolfram}, we observed empirically that
the Hoffman bound increases monotonically in $d$, but we did not find a proof.
Can this be proven rigorously?

\item What is the limiting behaviour of the Hoffman bound as $d \to 0$? So far,
we have only considered the question of what happens when $d \to \infty$.

\end{enumerate}

\section*{Acknowledgements}
The authors would like to thank Shimon Brooks, Alexander Lubotzky, and Fernando Oliveira
for helpful comments and remarks, and Hillel Furstenberg for pointing out that a version
of Lemma~\ref{lm:7}
ought to be true. The work was started when the first author held the position of
Lady Davis Postdoctoral
Fellow at the Hebrew University of Jerusalem, and the second author was a Ph.D. student there.

\subsection*{Funding}
The first author is supported by the CRM Applied Math Laboratory and NSERC Discovery
Grant 2015-06746. During the first part of the work he was supported
by ERC grant GA 320924-ProGeoCom and the Lady Davis Fellowship Trust.
The second author is supported by the ERC grant 336283.

\bibliographystyle{alpha}
\bibliography{mybib}

\end{document}